\documentclass{amsart}

\usepackage{amssymb, amsmath, amsthm, stmaryrd, amscd, mathrsfs}
\SetSymbolFont{stmry}{bold}{U}{stmry}{m}{n} 


\usepackage{verbatim}
\usepackage{a4wide}
\usepackage{dsfont}
\usepackage{enumerate}

\allowdisplaybreaks

\newtheorem{Def}{Definition}[section]
\newtheorem{Lem}[Def]{Lemma}
\newtheorem{Pro}[Def]{Proposition}
\newtheorem{Cor}[Def]{Corollary}
\newtheorem{The}[Def]{Theorem}

\newtheorem{Rem}[Def]{Remark}

\numberwithin{equation}{section}

\newcommand{\dist}{\operatorname{dist}}
\newcommand{\PP}{\mathbb P}

\newcommand{\R}{\mathbb R}

\renewcommand{\div}{\operatorname{div}}

\newcommand{\loc}{\text{loc}}

\newcommand{\EE}{\mathbb E}

\newcommand{\cH}{\mathcal H}

\begin{document}

\author{Jan Pr\"uss}
\address{\!\!\!Martin-Luther-Universit\"{a}t Halle-Wittenberg, Institut f\"{u}r Mathematik, 06099 Halle (Saale), Germany}
\email{jan.pruess@mathematik.uni-halle.de}
\author{Mathias Wilke}
\address{\!\!\!Universit\"{a}t Regensburg, Fakult\"{a}t f\"{u}r Mathematik, 93040 Regensburg, Germany}
\email{mathias.wilke@ur.de}


\begin{abstract}
The abstract theory of critical spaces developed in \cite{PW16} and \cite{PSW17} is applied to the Navier-Stokes equations in bounded domains with Navier boundary conditions as well as no-slip conditions. Our approach unifies, simplifies and extends existing work in the $L_p$-$L_q$ setting, considerably. As an essential step, it is shown that the strong and weak Stokes operators with Navier conditions admit an $\mathcal{H}^\infty$-calculus with $\mathcal{H}^\infty$-angle 0, and the real and complex interpolation spaces of these operators are identified.
\end{abstract}

\title[Critical spaces for Navier-Stokes]{On Critical spaces for the Navier-Stokes equations}


\maketitle

\section{Introduction}

There is no clear definition of 'critical spaces' for PDEs in the literature. One possibility would be 'the largest class of initial data such that the given PDE is uniquely solvable or well-posed in a prescribed class of functions'. This 'definition' has the disadvantage that by only changing the sign of one term of a PDE, the 'critical space' may change dramatically; so it is by no means a robust definition. In the literature, critical spaces are often introduced as the scaling invariant spaces, if the underlying PDE has such a scaling. Apparently, this seems to require that each of such equation has to be studied separately. If there is no scaling, it is not clear what to do.

In our innovative approach, we start with a given functional analytic setting, the 'class of functions' and find a space - we call it the \emph{critical space} - such that the problem is well-posed for initital values in this space. By means of counterexamples we can show that this is \emph{generically the largest such class}. Also, we can prove that this space is to some extent independent of the setting, more precisely, \emph{independent of the natural scale of function spaces} involved. Thirdly, we can also show that the critical spaces are \emph{scaling invariant}, if the original PDE admits a scaling, see Pr\"uss, Simonett \& Wilke \cite{PSW17} for these general facts. Our methods apply to a variety of problems, which besides the Navier-Stokes equations include Keller-Segel models in chemotaxis, Leslie-Ericksen equations for liquid crystals, Nernst-Planck-Poisson systems in electrochemistry, reaction-convection-diffusion systems, MHD equations, and quasi-geostrophic equations. We refer to our forthcoming paper Pr\"uss, Simonett \& Wilke \cite{PSW17}, as well as to Pr\"uss \cite{Pr16} for the quasi-geostrophic equations.

In this paper we apply this abstract approach to boundary value problems for the Navier-Stokes equations
\begin{align}
\begin{split}\label{eq:NS}
\partial_t u-\Delta u+u\cdot\nabla u+\nabla\pi&=0,\quad t>0,\ x\in\Omega,\\
\div u&=0,\quad t>0,\ x\in\Omega,\\
u(0)&=u_0,\quad t=0,\ x\in\Omega,
\end{split}
\end{align}
in a bounded domain $\Omega\subset\R^d$ with boundary $\Sigma:=\partial\Omega\in C^3$, where $u$ is the velocity field and $\pi$ means the pressure. We are mainly concerned with \emph{Navier boundary conditions} 
$$u\cdot\nu=0,\quad P_\Sigma ((\nabla u+\nabla u^T)\nu)+\alpha u=0\quad\text{on}\quad\Sigma$$
for \eqref{eq:NS}. Here $\nu$ is the outer unit normal field on $\Sigma$ and $P_\Sigma=I-\nu\times\nu$ the orthogonal projection onto the tangent bundle $\mathsf{T}\Sigma$. The parameter $\alpha\ge 0$ is given and serves as a friction parameter. If $\alpha=0$, then there is no friction at all on $\Sigma$, in this case one speaks of the \emph{pure-slip boundary conditions}. On the contrary, if $\alpha>0$, then there is some friction on the boundary $\Sigma$, in this case we have the \emph{partial-slip boundary conditions}. Dividing 
$$P_\Sigma ((\nabla u+\nabla u^T)\nu)+\alpha u=0$$
by $\alpha>0$ and taking the limit as $\alpha\to\infty$ one obtains (at least formally) the \emph{no-slip boundary condition} $u=0$ on $\Sigma$.

We will study critical spaces for \eqref{eq:NS} in strong and weak functional analytic settings. To be more precise, let $A_N:=-\PP\Delta$ with domain
$$D(A_N)=\{u\in H_q^2(\Omega)^d\cap L_{q,\sigma}(\Omega):P_{\Sigma}\left((\nabla u+\nabla u^T)\nu\right)+\alpha u=0\ \text{on}\ \Sigma\}$$
in $L_{q,\sigma}(\Omega)$, where $q\in (1,\infty)$, $L_{q,\sigma}(\Omega):=\PP L_q(\Omega)^d$ and $\PP$ denotes the Helmholtz projection. We call $A_N$ the \emph{strong Stokes operator} subject to Navier boundary conditions. With this operator at hand, we may rewrite \eqref{eq:NS} as an abstract semilinear Cauchy problem
\begin{equation}\label{eq:IntroStrong}
\partial_t u+A_Nu=F(u),\ t>0,\quad u(0)=u_0,
\end{equation}
in $L_{q,\sigma}(\Omega)$, where $F(u):=-\PP(u\cdot\nabla u)$. Concerning weak solutions, it follows from integration by parts that
$$(A_N u|\phi)_{L_2(\Omega)}= (\nabla u|\nabla\phi)_{L_2(\Omega)}+(L_\Sigma u_{\|}+\alpha u_{\|}|\phi_{\|})_{L_2(\Sigma)},$$
for all $\phi\in H_{q'}^1(\Omega)\cap L_{q',\sigma}(\Omega)$, where $q'=q/(q-1)$, $L_\Sigma=-\nabla_\Sigma\nu$ denotes the Weingarten tensor and $u_{\|}:=P_\Sigma u$ (see Subsection \ref{subsec:resolventprb} for details). We call the operator $A_{N,w}:H_{q}^1(\Omega)^d\cap L_{q,\sigma}(\Omega)\to (H_{q'}^1(\Omega)^d\cap L_{q',\sigma}(\Omega))'$, defined via 
$$\langle A_{N,w} u,\phi\rangle:=(\nabla u|\nabla\phi)_{L_2(\Omega)}+(L_\Sigma u_{\|}+\alpha u_{\|}|\phi_{\|})_{L_2(\Sigma)},$$
the \emph{weak Stokes operator} subject to Navier boundary conditions. Relying on this operator, we may rewrite \eqref{eq:NS} as the semilinear Cauchy problem
\begin{equation}\label{eq:IntroWeak}
\partial_t u+A_{N,w}u=F_w(u),\ t>0,\quad u(0)=u_0,
\end{equation}
in the dual space $(H_{q'}^1(\Omega)^d\cap L_{q',\sigma}(\Omega))'$, where $\langle F_w(u),\phi\rangle:=(u\otimes u|\nabla\phi)_{L_2(\Omega)}$.

We see that in the strong as well as in the weak setting, we may consider \eqref{eq:NS} in the more condensed form
\begin{equation}\label{eq:IntroSemilinEE}
\partial_t u+Au=G(u,u),\ t>0,\quad u(0)=u_0,
\end{equation}
for some operator $A$ with domain $D(A)=:X_1$ in a certain basic space $X_0$ and a bilinear function $G(u,u)$. At this point we want to advertize for time-weighted spaces, defined
by
$$u\in L_{p,\mu}(0,a;X)\Leftrightarrow t^{1-\mu}u\in L_p(0,a; X),\ \mu\in (1,p,1],$$
for some Banach space $X$.
The corresponding solution classes for \eqref{eq:IntroSemilinEE} in the time weighted case are
\begin{equation}\label{eq:solclass}
u\in H_{p,\mu}^1(0,a;X_0)\cap L_{p,\mu}(0,a;X_1)\hookrightarrow C([0,a];(X_0,X_1)_{\mu-1/p,p}).
\end{equation}
There are several benefits concerning the introduction of time weights as e.g.
\begin{itemize}
\item Reduced initial regularity
\item Instantaneous gain of regularity
\item Compactness of orbits
\end{itemize}
It is worth to mention that maximal regularity is independent of $\mu$. In the $L_p$-framework, this was first observed by Pr\"uss \& Simonett \cite{PS04}.

For convenience, let us rephrase our recent results from \cite{PW16}, for the special case of the semilinear evolution equation \eqref{eq:IntroSemilinEE}. To this end, let
$$\EE_{1,\mu}(0,a):=H_{p,\mu}^1(0,a;X_0)\cap L_{p,\mu}(0,a;X_1)$$
and
$$X_{\gamma,\mu}:=(X_0,X_1)_{\mu-1/p,p}.$$
\begin{The}[Pr\"{u}ss-Wilke \cite{PW16}]\label{thm:IntroThm}
Let $p\in (1,\infty)$, $\mu\in (1/p,1]$, $\beta\in (\mu-1/p,1)$ and
\begin{equation}\label{eq:IntroEqBeta}
2\beta-1\le \mu-1/p.
\end{equation}
Assume that $A:X_1\to X_0$ is bounded with $A\in \mathcal{B}IP(X_0)$ and power angle $\theta_A<\pi/2$ and suppose that $G:X_\beta\times X_\beta\to X_0$ is bilinear and bounded for some $\beta\in (\mu-1/p,1)$, where $X_\beta=D(A^\beta)$.
Then for any $u_0\in (X_0,X_1)_{\mu-1/p,p}$ there  is $a=a(u_0)>0$ and $\varepsilon =\varepsilon(u_0)>0$ such that equation \eqref{eq:IntroSemilinEE} admits a unique solution
$$ u(\cdot, u_1)\in H^1_{p,\mu}(0,a;X_0)\cap L_{p,\mu}(0,a; X_1) \hookrightarrow  C([0,a]; X_{\gamma,\mu}),$$
for each initial value $u|_{t=0}=u_1\in \bar{B}_{X_{\gamma,\mu}}(u_0,\varepsilon)$. Furthermore, there is a constant $c= c(u_0)>0$ such that
$$\|u(\cdot,u_1)-u(\cdot,u_2)\|_{\EE_{1,\mu}(0,a)} \leq c\|u_1-u_2\|_{X_{\gamma,\mu}},$$
for all $u_1,u_2\in \bar{B}_{X_{\gamma,\mu}}(u_0,\varepsilon)$.
\end{The}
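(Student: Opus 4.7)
The plan is to apply a contraction mapping argument in the time-weighted maximal regularity space $\EE_{1,\mu}(0,a)$, in the spirit of \cite{PS04} and the abstract framework of \cite{PW16,PSW17}. The first ingredient is linear maximal regularity: because $A\in\mathcal{B}IP(X_0)$ with power angle $\theta_A<\pi/2$, the Dore-Venni theorem yields maximal $L_p$-regularity for $\partial_t u+Au=f$ on any finite interval $(0,a)$, and the transference result of Pr\"uss-Simonett \cite{PS04} lifts this to the time-weighted $L_{p,\mu}$-scale. Thus for every $(u_1,f)\in X_{\gamma,\mu}\times L_{p,\mu}(0,a;X_0)$ the Cauchy problem $\partial_t u+Au=f$, $u(0)=u_1$, admits a unique solution in $\EE_{1,\mu}(0,a)$, with the corresponding estimate linear in the data.

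The second and crucial ingredient is a quadratic estimate for the nonlinearity. Using boundedness of $G:X_\beta\times X_\beta\to X_0$, factoring the weight as $t^{1-\mu}=(t^{1-\mu'})^2$ with $\mu':=(1+\mu)/2$, and applying H\"older in time, one obtains
\begin{equation*}
\|G(u,u)-G(v,v)\|_{L_{p,\mu}(0,a;X_0)}\le C(\|u\|_E+\|v\|_E)\|u-v\|_E,\qquad E:=L_{2p,\mu'}(0,a;X_\beta).
\end{equation*}
To close the argument one needs $\EE_{1,\mu}(0,a)\hookrightarrow E$. This factors through the mixed-derivative inclusion $\EE_{1,\mu}(0,a)\hookrightarrow H^{1-\beta}_{p,\mu}(0,a;X_\beta)$, available because $A\in\mathcal{B}IP$ with angle $<\pi/2$ permits interpolation between $H^1_{p,\mu}(X_0)$ and $L_{p,\mu}(X_1)$ with $D(A^\beta)=X_\beta$, followed by the scalar time-weighted Sobolev embedding $H^{1-\beta}_{p,\mu}(0,a)\hookrightarrow L_{2p,\mu'}(0,a)$. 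A direct scaling computation reveals that the latter is admissible precisely when $\mu'-1/(2p)\le(\mu-1/p)+(1-\beta)$, which rewrites as $2\beta-1\le\mu-1/p$. In other words, hypothesis \eqref{eq:IntroEqBeta} is exactly the criticality condition ensuring the required embedding, with equality corresponding to scale-invariance.

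With these two ingredients in hand I would then split $u=u^*+w$, where $u^*\in\EE_{1,\mu}(0,a)$ solves the homogeneous linear problem with datum $u_1$ and $w\in{}_0\EE_{1,\mu}(0,a):=\{z\in\EE_{1,\mu}(0,a):z(0)=0\}$ is the new unknown, satisfying $\partial_t w+Aw=G(u^*+w,u^*+w)$, $w(0)=0$. Define the Picard operator $T$ sending $w$ to the unique maximal regularity solution of this linear problem; combining the two steps above shows that $T$ leaves a small ball $\bar B_r(0)\subset{}_0\EE_{1,\mu}(0,a)$ invariant and is a strict contraction on it whenever both $r$ and $\|u^*\|_E$ are sufficiently small. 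Banach's fixed point theorem then produces the unique solution $u$, and running the same contraction estimate for two data $u_1,u_2\in\bar B_{X_{\gamma,\mu}}(u_0,\varepsilon)$ yields the asserted Lipschitz dependence.

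The main obstacle is producing the smallness needed in the preceding step, especially in the critical case $2\beta-1=\mu-1/p$ of \eqref{eq:IntroEqBeta}, where no positive power of $a$ survives the embedding $\EE_{1,\mu}\hookrightarrow E$. I would overcome this by a density argument: approximate $u_0$ in $X_{\gamma,\mu}$ by $\tilde u_0\in X_1$; the corresponding linear solution $\tilde u^*$ then lies in the stronger class $C([0,a];X_1)$ and satisfies $\|\tilde u^*\|_E\to 0$ as $a\downarrow 0$, while the perturbation $\|u^*-\tilde u^*\|_E$ is controlled by $\|u_0-\tilde u_0\|_{X_{\gamma,\mu}}$ uniformly in $a$ via Step one. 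Choosing the approximation fine enough and $a$ small enough, both contributions can be made arbitrarily small simultaneously. This also explains why the theorem asserts local existence only on a neighborhood $\bar B_{X_{\gamma,\mu}}(u_0,\varepsilon)$ of the selected initial datum rather than over all of $X_{\gamma,\mu}$.
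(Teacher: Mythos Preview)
The paper does not actually prove this theorem; it is quoted from \cite{PW16} and used as a black box throughout. There is therefore no in-paper proof to compare against.

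That said, your sketch is correct and matches the method of \cite{PW16} as far as one can infer from the hints the present paper gives. In particular, in the proof of Corollary~3.2 the authors invoke precisely the quadratic estimate
\[
\|F_w(v+u_*)\|_{L_{p,\mu}(0,a;X_{-1/2})}\le C\|v+u_*\|_{L_{2p,\sigma}(0,a;X_\beta^w)}^2,\qquad \sigma=(1+\mu_c)/2,
\]
citing \cite[Proof of Theorem 1.2]{PW16} and \cite[Proposition 3.4.3]{PS16}, which is exactly your H\"older-in-time splitting with $\mu'=(1+\mu)/2$ together with the mixed-derivative/Sobolev embedding $\EE_{1,\mu}\hookrightarrow L_{2p,\mu'}(0,a;X_\beta)$. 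Your identification of \eqref{eq:IntroEqBeta} as the threshold for this embedding, and your density argument to obtain smallness of the reference trajectory in the critical case, are also the standard ingredients in \cite{PW16}. So while there is nothing in the present paper to compare to, your outline is faithful to the cited source.
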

We call the weight $\mu$ \emph{subcritical} for \eqref{eq:IntroSemilinEE} if strict inequality holds in \eqref{eq:IntroEqBeta}, and \emph{critical} otherwise. So the case $\beta\le 1/2$ is always subcritical, and if $\beta>1/2$ we
call $\mu_c=2\beta-1+1/p$ the \emph{critical weight} and in this case
$$X_{\gamma,\mu_c}=(X_0,X_1)_{\mu_c-1/p,p}=D_A(2\beta-1,p)$$
is the \emph{critical trace space} for \eqref{eq:IntroSemilinEE}. We emphasize that this space of initial data is optimal for the solution class $\mathbb{E}_{1,\mu_c}$ of functions. Note also that $p\in (1,\infty)$ appears only as microscopic parameter, we always may choose $p$ as large as needed, $p$ is a kind of 'play' parameter. In partiular, it holds that
$$D_A(2\beta-1,p_1)\hookrightarrow D_A(2\beta-1,p_2)$$
whenever $p_2>p_1$.

The strategy for applying Theorem \ref{thm:IntroThm} to \eqref{eq:IntroSemilinEE} is as follows. At first, we fix $X_0$, $X_1$ and $A$ and show that $A\in\mathcal{B}IP(X_0)$ with $\theta_A<\pi/2$. Then we identify the spaces $X_\beta=[X_0,X_1]_\beta$ and $(X_0,X_1)_{\eta,p}$. Thirdly, we estimate $G(u,u)$ with optimal $\beta\in [0,1)$.

It turns out that for the Naver-Stokes equation \eqref{eq:NS} in the strong setting $X_0^s=L_{q,\sigma}(\Omega)$, $X_1^s=D(A_N)$, $A=A_N$, and $G(u,u)=F(u)$, the critical weight reads
$$\mu_c=\frac{d}{2q}+\frac{1}{p}-\frac{1}{2},$$
provided $p\in (1,\infty)$ and $q\in (d/3,d)$ such that $\frac{1}{p}+\frac{d}{q}\le 3$. The corresponding critical trace space is then given by 
$$X_{\gamma,\mu_c}^s=(X_0^s,X_1^s)_{\frac{d}{2q}-\frac{1}{2},p}=B_{qp}^{d/q-1}(\Omega)^d\cap L_{q,\sigma}(\Omega)=:B_{qp,\sigma}^{d/q-1}(\Omega),$$
provided $d\le 3$; for the general case we refer to Section \ref{sec:Hinftystrong}. In the weak setting $X_0^w=(H_{q'}^1(\Omega)^d\cap L_{q',\sigma}(\Omega))'=:H_{q,\sigma}^{-1}(\Omega)$, $X_1^w=D(A_{N,w})=:H_{q,\sigma}^1(\Omega)$, $A=A_{N,w}$, and $G(u,u)=F_w(u)$, the critical weight is given by
$$\mu_c=\frac{d}{2q}+\frac{1}{p},$$
if $p\in (1,\infty)$ and $q\in (d/2,\infty)$ such that $\frac{1}{p}+\frac{d}{2q}\le 1$. The critical trace space for \eqref{eq:NS} in the weak setting may then be computed to the result
$$X_{\gamma,\mu_c}^w=(X_0^w,X_1^w)_{\frac{d}{2q},p}=B_{qp}^{d/q-1}(\Omega)^d\cap L_{q,\sigma}(\Omega)=B_{qp,\sigma}^{d/q-1}(\Omega)$$
if $q\in (d/2,d)$ and 
$$X_{\gamma,\mu_c}^w=(X_0^w,X_1^w)_{\frac{d}{2q},p}=(B_{q'p'}^{1-d/q}(\Omega)^d\cap L_{q',\sigma}(\Omega))'=:B_{qp,\sigma}^{d/q-1}(\Omega)$$
if $q>d$. Note that the Sobolev index of the spaces $B_{qp}^{d/q-1}$ equals $-1$ and is therefore independent of $q$. This in turn implies the embedding
$$B_{q_1p_1}^{d/q_1-1}\hookrightarrow B_{q_2p_2}^{d/q_2-1}$$
for all $1\le q_1<q_2\le\infty$ and $p_1,p_2\in [1,\infty]$, since $d/q_1>d/q_2$. Furthermore, it holds that
$$H_{q}^s=F_{q2}^s\hookrightarrow F_{qq}^s=B_{qq}^s\hookrightarrow B_{qp}^s$$
provided $p\ge q\ge 2$ and $s\ge 0$, where $F_{qp}^s$ stand for the Triebel-Lizorkin spaces.

Observe also that in the range $q\in (d/2,d)$, where both approaches are available, the critical spaces coincide. This is in accordance with our finding that the critical spaces are always largely independent of the functional analytic setting in the general scale of function spaces involved. Note that the homogeneous versions of the critical spaces $B_{qp}^{d/q-1}(\R^d)$ are invariant under the scaling $u(\cdot)\mapsto \lambda u(\lambda\cdot)$, $\lambda>0$ for the $d$-dimensional Navier-Stokes equations \eqref{eq:NS} (see e.g.\ Cannone \cite{Can97} and Giga, Giga \& Saal \cite{GGS10}) in agreement with our general theory.

Critical spaces for the Navier-Stokes equations have been considered by a number of authors during the last fifty years. Fujita \& Kato \cite{FuKa62} showed the existence and uniqueness of a strict solution to \eqref{eq:NS} for the case of no-slip boundary conditions in $L_{2,\sigma}(\Omega)$. The proof is based on the celebrated Fujita-Kato method in two and three space dimenions. In \cite{GiMi85}, Giga \& Miyakawa showed the existence of a unique global solution of \eqref{eq:NS} subject to Dirichlet boundary conditions, provided the initial value $u_0$ is small in $L_d(\Omega)^d$. Their approach uses an $L_q$-theory which generalizes the $L_2$-theory by Fujita \& Kato. Amann \cite{Ama00} showed with the help of extrapolation-interpolation scales, that for initial values $u_0\in {_0}H_{q,\sigma}^{d/q-1}(\Omega)$, there exists a unique strong solution of the Navier-Stokes equations \eqref{eq:NS} subject to Dirichlet boundary conditions, provided $q>d/3$. 

Critical spaces within Serrins class $L_p(0,T;L_q(\Omega)^d)$, $2/p+d/q=1$, $2<p<\infty$, $d<q<\infty$, have been considered by Farwig \& Sohr \cite{FaSo09}, showing that, within Serrins class, there exists a unique local strong solution of \eqref{eq:NS} subject to Dirichlet boundary conditions if the initial value satisfies $u_0\in L_{2,\sigma}(\Omega)\cap B_{qp}^{d/q-1}(\Omega)^d$. This result has been extended by Farwig et al. \cite{FaGi16} to a time weighted version of Serrins class. To be more precise, it is shown in \cite{FaGi16} that \eqref{eq:NS} subject to Dirichlet boundary conditions has a unique local strong solution with $t^\alpha u\in L_p(0,T;L_q(\Omega)^d)$, $2/p+d/q=1-2\alpha$, $2<r<\infty$, $d<q<\infty$, $0<\alpha<1/2$ if 	$u_0\in L_{2,\sigma}(\Omega)\cap B_{qp}^{d/q-1}(\Omega)^d$. These papers deal with weak solutions in the sense of Leray-Hopf, as the initial value is required to belong to $L_{2,\sigma}(\Omega)$. One should not mix up these weak solutions with our solution class defined in \eqref{eq:solclass}.

In \cite{Ri16}, Ri et al.\ have shown the existence and uniqueness of a global solution to \eqref{eq:NS} with initial value $u_0\in {_0}B_{q,\infty,\sigma}^0(\Omega)^d$, $q\ge d$, having a small norm. For the limiting case $p=q=\infty$, it has recently been found by Bourgain \& Pavlovi\'{c} \cite{BoPa08} that global well-posedness for \eqref{eq:NS} may fail under any smallness asssumption on $u_0\in B_{\infty,\infty}^{-1}(\R^d)$. The largest critical space where one has the existence of a unique global solution to \eqref{eq:NS}, for initital data with a small norm, is $BMO(\R^d)^{-1}$, the dual space of functions with bounded mean oscillation on $\R^d$, see Koch \& Tataru \cite{KoTa01}. We emphasize that the last two articles work in $\Omega=\R^d$; for the case of a bounded domain $\Omega$ there appear to be no analogous results available in the literature.

So far, there seem to be no results on critical spaces for \eqref{eq:NS} with Navier boundary conditions. One goal of the present article is to close this gap. One main point in applying Theorem \ref{thm:IntroThm} to \eqref{eq:IntroStrong} or \eqref{eq:IntroWeak} is to show that $A_N$ and $A_{N,w}$ admit bounded imaginary powers with power angle less than $\pi/2$, a problem of independent interest. To our best knowledge, the only result available up to now is Saal \cite{Saa06} where the author proves that $A_N$ possesses a bounded $\mathcal{H}^\infty$-calculus in $L_{q,\sigma}(\R_+^d)$ with $\mathcal{H}^\infty$-angle $\phi_{A_N}^\infty=0$. Note that this in turn implies that $A_N\in \mathcal{B}IP(L_{q,\sigma}(\R^d_+))$ with power angle $\theta_{A_N}=0$, see e.g.\ \cite[Section 3.3]{PS16}.

We will first show that $A_{N,w}$ has a bounded $\mathcal{H}^\infty$-calculus in $H_{q,\sigma}^{-1}(\Omega)$ with $\mathcal{H}^\infty$-angle $\phi_{A_{N,w}}^\infty=0$. For that purpose, we begin with the so-called \emph{perfect-slip} boundary conditions 
\begin{equation}\label{eq:PSBC}
u\cdot\nu=0,\quad P_\Sigma ((\nabla u-\nabla u^T)\nu)=0\quad\text{on}\quad\Sigma,
\end{equation}
instead of the Navier conditions. In Subsection \ref{subsec:PSBC}, we prove that the Laplacian $\Delta_{ps}$ in $L_{q}(\Omega)^d$ subject to \eqref{eq:PSBC}, satisfies
$$\PP\Delta_{ps}=\Delta_{ps}\PP.$$
Evidently, the Stokes operator $A_{ps}:=-\PP\Delta_{ps}$ subject to the boundary conditions \eqref{eq:PSBC} is the restriction of $-\Delta_{ps}$ to $L_{q,\sigma}(\Omega)$. It follows from \cite{DDHPV04} and spectral theory that  for any $\omega>0$ the operator $\omega-\Delta_{ps}$ has a bounded $\mathcal{H}^\infty$-calculus in $L_{q,\sigma}(\Omega)$  with $\mathcal{H}^\infty$-angle $\phi_{\omega-\Delta_{ps}}=0$. This in turn implies that the Stokes operator $\omega+A_{ps}$ inherits this property in $L_{q,\sigma}(\Omega)$ by the boundedness of the Helmholtz projection $\PP$. In Subsection \ref{subsec:InExScale} we apply the theory of interpolation-extrapolation scales from \cite{Ama95} to the operator $\omega+A_{ps}$. As a result, we obtain an extrapolated operator $A_{-1/2}:H_{q,\sigma}^1(\Omega)\to H_{q,\sigma}^{-1}(\Omega)$ of $\omega+A_{ps}$ with the representation
$$
\langle A_{-1/2}u,v\rangle=\omega\int_\Omega u\cdot v\ dx+\int_\Omega \nabla u:\nabla v\ dx-\int_{\partial\Omega}L_\Sigma u\cdot v\ dx,
$$
for all $u\in H_{q,\sigma}^1(\Omega)$, $v\in H_{q',\sigma}^1(\Omega)$ and the property $A_{-1/2}\in\mathcal{H}^\infty(H_{q,\sigma}^{-1}(\Omega))$ with angle $\phi_{A_{-1/2}}^\infty=0$.
Since the operators $A_{-1/2}$ and $A_{N,w}$ are linked via the identity
$$\omega+A_{N,w}=A_{-1/2}+B,$$
with a lower order perturbation $B$ of $A_{-1/2}$, we obtain from perturbation theory and spectral estimates for $A_{N,w}$, that
$$\omega+A_{N,w}\in \mathcal{H}^\infty(H_{q,\sigma}^{-1}(\Omega))\quad\text{with}\quad \phi_{\omega+A_{N,w}}^\infty=0$$
for each $\omega>0$. This allows us to apply Theorem \ref{thm:IntroThm} for the weak setting to obtain well-posedness of \eqref{eq:IntroWeak} in critical spaces, see Section \ref{sec:NavierBC}. Moreover, we show global existence of solutions to \eqref{eq:IntroWeak} for initial data having a small norm in the critical trace spaces. Additionally, we prove that any weak solution of \eqref{eq:IntroWeak} regularizes to a strong solution of \eqref{eq:IntroStrong} as soon as $t>0$, by maximal $L_q$-regularity of $A_N$, and that any solution of \eqref{eq:IntroWeak} starting sufficiently close to zero, converges to zero at an exponential rate as $t\to\infty$.

Section \ref{sec:Hinftystrong} is devoted to the strong Stokes operator $A_N$. We apply once again the theory of extrapolation-interpolation scales from \cite{Ama95} to show that the operator $A_N$ is the restriction of $A_{N,w}$ to $L_{q,\sigma}(\Omega)$, wherefore $A_{N}\in \mathcal{H}^\infty(L_{q,\sigma}(\Omega))$ with angle $\phi_{A_N}^\infty=0$. Consequently, problem \eqref{eq:IntroStrong} is well-posed in the critical spaces by Theorem \ref{thm:IntroThm}.

Finally, in Section \ref{sec:DBC} we show how to apply our theory from \cite{PW16} to the Navier-Stokes equations \eqref{eq:NS} subject to no-slip boundary conditions $u=0$ on $\Sigma$. Based on the well-known fact that the Stokes operator $A_d:=-\PP\Delta$ with domain
$$D(A_D)=\{u\in H_q^2(\Omega)^d\cap L_{q,\sigma}(\Omega): u=0\ \text{on}\ \Sigma\}$$
has a bounded $\mathcal{H}^\infty$-calculus in $L_{q,\sigma}(\Omega)$ with angle $\phi_{A_D}^\infty=0$, we make use of extrapolation-interpolation arguments to show that the corresponding weak operator, given by
$$\langle A_{D,w}u,\phi\rangle=\int_\Omega\nabla u:\nabla\phi\ dx$$
for $u\in {_0}H_{q,\sigma}^1(\Omega)$ and $\phi\in {_0}H_{q',\sigma}^1(\Omega)$, inherits the same property. Thus, we are able to extend our result from \cite{PW16} to the weak scale. The keypoint here is to define a projection $Q$ on $L_q(\Omega)^d$ with range $L_{q,\sigma}(\Omega)$ such that $QD(\Delta_D)=D(\Delta_D)\cap R(Q)$, where $\Delta_D$ denotes the Laplacian subject to Dirichlet boundary conditions in $L_q(\Omega)^d$. Note that such an identity fails to hold for the Helmholtz projection $\PP$ in case of Dirichlet boundary conditions.

\section{Perfect slip boundary conditions}\label{sec:PSBC}

\subsection{The resolvent problem}\label{subsec:resolventprb}

Let $\Omega\subset\R^d$ a bounded domain with boundary $\Sigma:=\partial\Omega\in C^3$ and outer unit normal field $\nu\in C^2(\Sigma)^d$. For $\lambda\in\R$ and $f\in L_{q,\sigma}(\Omega)$, consider the elliptic problem
\begin{align}
\begin{split}\label{eq:prbU}
\lambda u-\Delta u&=f,\quad x\in\Omega,\\
u\cdot \nu&=0,\quad x\in\Sigma,\\
P_{\Sigma}\left((\nabla u-\nabla u^T)\nu\right)&=0,\quad x\in\Sigma,
\end{split}
\end{align}
where $P_{\Sigma}=I-\nu\otimes\nu$ denotes the orthogonal projection onto the tangent bundle $\mathsf{T}\Sigma$. For \eqref{eq:prbU} we have the following result
\begin{Lem}\label{lem:divfree}
Let $q\in (1,\infty)$ and $f\in L_{q,\sigma}(\Omega)$. Then there is $\lambda_0>0$ such that for each $\lambda\ge\lambda_0$ problem \eqref{eq:prbU} has a unique solution $u\in H_q^2(\Omega)^d\cap L_{q,\sigma}(\Omega)$.
\end{Lem}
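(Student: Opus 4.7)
The strategy is to decouple the solenoidal constraint from the resolvent problem: first solve the elliptic boundary value problem in $H_q^2(\Omega)^d$ without imposing divergence-freeness, and then show \emph{a posteriori} that the resulting solution is automatically divergence-free whenever $f \in L_{q,\sigma}(\Omega)$.

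For the first step, I would view
$$\lambda u - \Delta u = f \text{ in }\Omega,\qquad u\cdot\nu = 0,\quad P_\Sigma\bigl((\nabla u-\nabla u^T)\nu\bigr) = 0 \text{ on }\Sigma,$$
as an elliptic BVP with diagonal principal symbol $-\Delta$. In local coordinates flattening $\Sigma$ to $\{x_d=0\}$, the condition $u\cdot\nu = 0$ reads $u_d = 0$, and substituting this into the perfect-slip condition collapses it to $\partial_d u_\alpha = 0$ for $\alpha < d$: Dirichlet on the normal component and Neumann on each tangential one. The Lopatinskii--Shapiro condition is thus immediate, so the standard $L_q$-theory of elliptic boundary value problems (e.g.\ \cite{DDHPV04}) produces some $\lambda_0 > 0$ such that for each $\lambda\geq\lambda_0$ and each $f \in L_q(\Omega)^d$ the BVP admits a unique solution $u \in H_q^2(\Omega)^d$ together with the expected resolvent estimate.

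For the second step, assume $f \in L_{q,\sigma}(\Omega)$, so that $\div f = 0$ in $\Omega$ and $f\cdot\nu|_\Sigma = 0$ in the trace sense, and set $\phi := \div u$. Taking divergence of the equation gives $\lambda\phi-\Delta\phi = 0$ in $\Omega$. To derive a boundary condition for $\phi$ I would use the identity $(\Delta u)_i = \partial_i\div u + \partial_j(\partial_j u_i - \partial_i u_j)$ and take the normal trace on $\Sigma$; writing $B:= \nabla u - \nabla u^T$ and extending $\nu$ smoothly with $\nabla_\nu \nu = 0$, the left-hand side $\nu\cdot(\lambda u - f)$ vanishes, so that
$$\partial_\nu\phi|_\Sigma = \div(B\nu)|_\Sigma + (\nabla\nu):B|_\Sigma.$$
Both terms on the right vanish on $\Sigma$: the first because the perfect-slip condition forces $P_\Sigma(B\nu) = 0$ while the antisymmetry of $B$ forces $\nu\cdot B\nu\equiv 0$ identically, hence $B\nu|_\Sigma = 0$ and a short inspection shows every term in $\div(B\nu)$ vanishes on $\Sigma$; the second because $\nabla\nu|_\Sigma$ reduces on the tangent bundle to the symmetric Weingarten-type tensor $-L_\Sigma$, whereas $B$ is antisymmetric, so the double contraction vanishes. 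Thus $\phi$ solves the scalar homogeneous Neumann--Helmholtz problem, which for $\lambda > 0$ admits only $\phi\equiv 0$. Uniqueness in $H_q^2(\Omega)^d\cap L_{q,\sigma}(\Omega)$ is then inherited from Step~1.

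The main difficulty is the second step on a curved boundary: producing the clean Neumann condition $\partial_\nu\div u = 0$ hinges on two algebraic cancellations specific to the perfect-slip setting, namely $\nu\cdot B\nu\equiv 0$ from antisymmetry and the vanishing double contraction of a symmetric tensor with an antisymmetric one. Once these are in place, scalar Neumann--Helmholtz uniqueness closes the argument.
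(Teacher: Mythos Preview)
Your approach is correct and takes a genuinely different route from the paper's. Both share Step~1 and both ultimately exploit the same boundary cancellation---that perfect slip together with the antisymmetry of $B=\nabla u-\nabla u^{\mathsf T}$ forces $B\nu|_\Sigma=0$---but they deploy it differently. The paper does \emph{not} set $\phi=\div u$; instead, for $q\ge 2$ it introduces an auxiliary potential solving $\Delta\phi=\lambda\div u$, $\partial_\nu\phi=0$, and after pairing in $L_2$ and two integrations by parts obtains
\[
\|\nabla\phi\|_{L_2(\Omega)}^2+\|\div u\|_{L_2(\Omega)}^2=(\partial_\nu u_\|-L_\Sigma u_\|\mid\nabla_\Sigma\phi)_{L_2(\Sigma)},
\]
then shows the right-hand side vanishes via the identity $\partial_\nu u_\|=L_\Sigma u_\|$ (equivalent to $B\nu|_\Sigma=0$); the case $q<2$ is then done by density. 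Your argument is more direct and conceptually cleaner: $\div u$ itself solves the homogeneous scalar Neumann--Helmholtz problem, so uniqueness finishes. The price is that your pointwise boundary identity implicitly takes traces of $L_q$-objects (e.g.\ $f\cdot\nu|_\Sigma$ for $f\in L_{q,\sigma}$, or $\partial_\nu\div u$ for $u\in H_q^2$), so to be rigorous you should either recast it as a weak Neumann formulation or first treat smooth solenoidal $f$ and approximate---precisely the device the paper uses for $q<2$. One small point: ``a short inspection shows every term in $\div(B\nu)$ vanishes on $\Sigma$'' is too quick; what actually happens is that the tangential divergence vanishes because $B\nu|_\Sigma=0$, while the remaining normal contribution $\nu\cdot\partial_\nu(B\nu)=\partial_\nu(\nu\cdot B\nu)-(\partial_\nu\nu)\cdot B\nu$ vanishes since $\nu\cdot B\nu\equiv 0$ and $\partial_\nu\nu=0$.
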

\begin{proof}
Existence and uniqueness of a solution $u\in H_q^2(\Omega)^d$ follows from elliptic theory in $L_q(\Omega)^d$. It remains to show that $u\in L_{q,\sigma}(\Omega)$. To this end, we consider a solution $\phi\in H_q^3(\Omega)$ of the Neumann problem
\begin{align*}
\Delta\phi&=\lambda\div u,\quad x\in\Omega,\\
\partial_\nu\phi&=0,\quad x\in \Sigma.
\end{align*}
We note on the go that this problem is solvable, since $u\cdot\nu=0$ on $\Sigma$. Assume first that $q\in [2,\infty)$. Then we integrate by parts to the result
$$\|\nabla\phi\|_{L_2(\Omega)}^2=-(\Delta\phi|\phi)_{L_2(\Omega)}+(\partial_\nu\phi|\phi)_{L_2(\Sigma)}=-(\lambda\div u|\phi)_{L_2(\Omega)}=(\lambda u|\nabla\phi)_{L_2(\Omega)},$$
since $u\cdot \nu=\partial_\nu\phi=0$ on $\Sigma$. Inserting the differential equation $\eqref{eq:prbU}_1$ yields
$$\|\nabla\phi\|_{L_2(\Omega)}^2=(\Delta u|\nabla\phi)_{L_2(\Omega)}+(f|\nabla\phi)_{L_2(\Omega)}=(\Delta u|\nabla\phi)_{L_2(\Omega)},$$
where we made use of the fact $f\in L_{q,\sigma}(\Omega)$. We integrate by parts twice to obtain
\begin{align*}\|\nabla\phi\|_{L_2(\Omega)}^2&=(u|\nabla\Delta\phi)_{L_2(\Omega)}+\sum_{i=1}^d\left((\nu_i\partial_i u|\nabla\phi)_{L_2(\Sigma)}-(u|\nu_i\partial_i\nabla\phi)_{L_2(\Sigma)}\right)\\
&=-\|\div u\|_{L_2(\Omega)}^2+(\partial_\nu u|\nabla\phi)_{L_2(\Sigma)}-(u|\partial_\nu\nabla\phi)_{L_2(\Sigma)},
\end{align*}
where
$$\partial_\nu w:=\sum_{i=1}^d(\nu_i\partial_i)w$$
for the normal derivative of a vector field $w$ on $\Sigma$.

In what follows, we will rewrite the boundary terms. For that purpose, we make use of the splitting
\begin{equation}\label{eq:splitting}
w=P_{\Sigma}w+(w\cdot\nu)\nu=:w_{\|}+w_\nu\nu,
\end{equation}
where $w_{\|}$ and $w_\nu\nu$ denote that tangential and normal parts, respectively, of a vector field $w$. We extend the splitting \eqref{eq:splitting} to a neighborhood of $\Sigma$ as follows. There exists a tubular neighborhood 
$$U_a=\{x\in\R^d:\dist(x,\Sigma)<a\},\ a>0,$$
of $\Sigma$ such that the mapping
$$\Sigma\times(-a,a)\ni(p,r)\mapsto p+r\nu(p)\in U_a$$
is a $C^2$-diffeomorphism with inverse $x\mapsto (d_\Sigma(x),\Pi_\Sigma(x))$, $x\in U_a$, where $d_\Sigma(x)$ denotes the \emph{signed distance} of a point $x\in U_a$ to $\Sigma$ and $\Pi_\Sigma(x)$ means the \emph{metric projection} of $x$ onto $\Sigma$, see \cite[Subsection 2.3.1]{PS16}. For $x\in U_a$ we then define an extension of the normal vector field by $\tilde{\nu}(x)=\nu(\Pi_\Sigma(x))$, $x\in U_a$. With this definition we may extend $\eqref{eq:splitting}$ to the set $U_a\cap\overline{\Omega}$ by replacing $\nu$ by $\tilde{\nu}$. Note that $\partial_\nu\tilde{\nu}=0$, as $\tilde{\nu}$ is constant in normal direction. To keep the notation simple, we drop the tilde in the sequel.

For the first boundary term we obtain
\begin{align*}
(\partial_\nu u|\nabla\phi)_{L_2(\Sigma)}&=(\partial_\nu u_{\|}|\nabla\phi)_{L_2(\Sigma)}+(\partial_\nu (u_{\nu}\nu)|\nabla\phi)_{L_2(\Sigma)}\\
&=(\partial_\nu u_{\|}|\nabla\phi)_{L_2(\Sigma)}+((\partial_\nu u_{\nu})\nu|\nabla\phi)_{L_2(\Sigma)}\\
&=(\partial_\nu u_{\|}|\nabla\phi)_{L_2(\Sigma)}+(\partial_\nu u_{\nu}|\partial_\nu\phi)_{L_2(\Sigma)}\\
&=(\partial_\nu u_{\|}|\nabla\phi)_{L_2(\Sigma)}=(\partial_\nu u_{\|}|\nabla_{\Sigma}\phi)_{L_2(\Sigma)},
\end{align*}
where $\nabla_{\Sigma}=P_{\Sigma}\nabla$ denotes the surface gradient on $\Sigma$. The boundary conditions $u\cdot\nu=0$ and $\partial_\nu\phi=0$ yield for the second boundary term
\begin{align*}
(u|\partial_\nu\nabla\phi)_{L_2(\Sigma)}&=(u_{\|}|\partial_\nu\nabla\phi)_{L_2(\Sigma)}\\
&=\sum_{i=1}^d(u_{\|}|\nu_i\partial_i\nabla\phi)_{L_2(\Sigma)}=\sum_{i=1}^d(u_{\|}|\nu_i\nabla\partial_i\phi)_{L_2(\Sigma)}\\
&=(u_{\|}|\nabla_{\Sigma}\partial_\nu\phi)_{L_2(\Sigma)}-
\sum_{i=1}^d(u_{\|}|(\nabla_{\Sigma}\nu_i)\partial_i\phi)_{L_2(\Sigma)}\\
&=-
\sum_{i=1}^d(u_{\|}|(\nabla_{\Sigma}\nu_i)\partial_i\phi)
_{L_2(\Sigma)}=(u_{\|}|L_{\Sigma}\nabla\phi)
_{L_2(\Sigma)}=(u_{\|}|L_{\Sigma}\nabla_{\Sigma}\phi)
_{L_2(\Sigma)},
\end{align*}
where $L_{\Sigma}=-\nabla_{\Sigma}\nu$ denotes the Weingarten tensor. In summary, we obtain the identity
$$\|\nabla\phi\|_{L_2(\Omega)}^2+\|\div u\|_{L_2(\Omega)}^2=(\partial_\nu u_{\|}-L_{\Sigma}u_{\|}|\nabla_{\Sigma}\phi)_{L_2(\Sigma)}.$$
On the other hand
\begin{align*}
P_{\Sigma}\left((\nabla u-\nabla u^T)\nu\right)&=P_{\Sigma}(\nabla u\nu)-P_{\Sigma}(\partial_\nu u)\\
&=P_{\Sigma}(\nabla (u\cdot\nu))+P_{\Sigma}(L_{\Sigma}u)-P_{\Sigma}(\partial_\nu u)\\
&=\nabla_{\Sigma}(u\cdot\nu)+L_{\Sigma}u-P_{\Sigma}(\partial_\nu u)\\
&=L_{\Sigma}u_{\|}-P_{\Sigma}(\partial_\nu u),
\end{align*}
since $L_{\Sigma}u$ is tangential and $u\cdot\nu=0$ on $\Sigma$. Furthermore we have
$$\partial_\nu u=\partial_\nu u_{\|}+\partial_\nu(u_\nu\nu)=\partial_\nu u_{\|}+(\partial_\nu u_\nu)\nu,$$
since $\partial_\nu\nu=0$. It follows readily by the boundary conditions in \eqref{eq:prbU} that
$$0=P_{\Sigma}\left((\nabla u-\nabla u^T)\nu\right)=L_{\Sigma}u_{\|}-\partial_\nu u_{\|},$$
where we have used the fact that $\partial_\nu u_{\|}$ is tangential. This shows that $\nabla\phi,\div u=0$.

If $1<q<2$ and $f\in L_{q,\sigma}(\Omega)$, then clearly $u\in H_q^2(\Omega)^d$. To show $u\in L_{q,\sigma}(\Omega)$, we take a sequence $(f_n)\subset C_c^\infty(\Omega)^d$ of divergence free vector fields such that $f_n\to f$ in $L_q(\Omega)^d$. The corresponding solutions $u_n$ of \eqref{eq:prbU} with $f$ replaced by $f_n$ satisfy $\div u_n=0$ by what we have shown above. Letting $n\to\infty$ yields $u\in L_{q,\sigma}(\Omega)$, since $L_{q,\sigma}(\Omega)$ is closed in $L_q(\Omega)^d$.
\end{proof}

\subsection{The Stokes operator with perfect slip boundary conditions}\label{subsec:PSBC}

Denote by $\PP:L_q(\Omega)^d\to L_{q,\sigma}(\Omega)$ the Helmholtz-Projection and define an operator $\Delta_{ps}:D(\Delta_{ps})\to L_q(\Omega)^d$ by $\Delta_{ps}u=\Delta u$ with domain 
$$D(\Delta_{ps}):=\{u\in H_q^2(\Omega)^d:u\cdot\nu=0,\ P_{\Sigma}\left((\nabla u-\nabla u^T)\nu\right)=0\}.$$
Then Lemma \ref{lem:divfree} implies
$$(\lambda-\Delta_{ps})^{-1}R(\PP)\subset R(\PP).$$
Moreover, it holds that
$$(\lambda-\Delta_{ps})^{-1}N(\PP)\subset N(\PP).$$
Indeed, if $f=\nabla g\in L_q(\Omega)^d$ is a gradient field, we first solve the scalar elliptic problem
\begin{align*}
\lambda v-\Delta v&=g,\quad x\in\Omega,\\
\partial_\nu v&=0,\quad x\in\Sigma,
\end{align*}
to obtain a unique solution $v\in W_q^3(\Omega)$. Defining $u:=\nabla v$ it follows that $u\in W_q^2(\Omega)^d$ and $u$ solves the elliptic problem \eqref{lem:divfree}, since the Hessian $\nabla^2 v$ is symmetric.

Applying $(\lambda-\Delta_{ps})^{-1}$ to $L_q(\Omega)^d=L_{q,\sigma}(\Omega)\oplus G_p(\Omega)$, yields the decomposition
$$D(\Delta_{ps})=[D(\Delta_{ps})\cap R(\PP)]\oplus [D(\Delta_{ps})\cap N(\PP)],$$
which shows that $\PP D(\Delta_{ps})=D(\Delta_{ps})\cap R(\PP)$. Now, for $u\in D(\Delta_{ps})$ it holds that $\Delta_{ps}\PP u\in R(\PP)$ and $\Delta_{ps}(I-\PP)u\in N(\PP)$, hence
$$\PP \Delta_{ps}u=\PP \Delta_{ps}(\PP u+(I-\PP)u)=\PP \Delta_{ps}\PP u=\Delta_{ps}\PP u,$$
for all $u\in D(\Delta_{ps})$. It is an immediate consequence that the Stokes-Operator $A_{ps}:=-\PP \Delta_{ps}:D(\Delta_{ps})\cap R(\PP)\to R(\PP)$ is the restriction of $-\Delta_{ps}$ to $R(\PP)$,\ i.e. 
$$A_{ps}=-\Delta_{ps}|_{R(\PP)}.$$
It follows from \cite{DDHPV04} that for each $\phi>0$ there exists $\mu_\phi\ge 0$ such that $\mu_\phi-\Delta_{ps}\in \cH^\infty(L_q(\Omega)^d)$ with $\cH^\infty$-angle $\phi_{\mu_\phi-\Delta_{ps}}^\infty\le\phi$. Of course, by restriction to $L_{q,\sigma}(\Omega)$ and the fact that $\PP \Delta_{ps}=\Delta_{ps}\PP$, the same holds for the Stokes operator $A_{ps}$.

We continue with some spectral properties of the operators $-\Delta_{ps}$ and $A_{ps}$. Since $-\Delta_{ps}$ has a compact resolvent in $L_q(\Omega)^d$, the spectrum $\sigma(-\Delta_{ps})$ of $-\Delta_{ps}$ consists solely of isolated eigenvalues with finite multiplicity and the spectrum does not depend on $q\in(1,\infty)$. Let $\lambda\in\sigma(\Delta_{ps})$ and consider the eigenvalue problem
\begin{align}
\begin{split}\label{eq:EVprb1}
\lambda u-\Delta u&=0,\quad x\in\Omega,\\
u\cdot \nu&=0,\quad x\in\Sigma,\\
P_{\Sigma}\left((\nabla u-\nabla u^T)\nu\right)&=0,\quad x\in\Sigma.
\end{split}
\end{align}
Testing the first equation with $u$ and integrating by parts yields
$$\lambda\|u\|_{L_2(\Omega)}^2+\|\nabla u\|_{L_2(\Omega)}^2=(\partial_\nu u|u)_{L_2(\partial\Omega)}.$$
We have already shown above that the boundary conditions imply
$$\partial_\nu u=\partial_\nu u_{\|}+(\partial_\nu u_\nu)\nu,$$
as well as
$$0=L_{\Sigma}u_{\|}-\partial_\nu u_{\|}.$$
Therefore, we obtain the equation
\begin{equation}\label{eq:spec1}
\lambda\|u\|_{L_2(\Omega)}^2+\|\nabla u\|_{L_2(\Omega)}^2=(L_\Sigma u_{\|}|u_{\|})_{L_2(\partial\Omega)}.
\end{equation}
Since $\Sigma$ is compact, there exists $\omega\ge 0$ such that $\sigma(\Delta_{ps})\subset (-\infty,\omega)$. If $L_\Sigma$ is negative semi-definite, we even have $\sigma(\Delta_{ps})\subset (-\infty,0)$. To see this, note that $L_\Sigma\le 0$ implies $\nabla u=0$, hence $u$ is constant, say $u=a\in \R^d$. Define a function $\varphi:\Sigma\to\R$ by means of
$$\varphi(p)=a\cdot p,\quad p\in\Sigma.$$
Since $\Sigma$ is compact in $\R^d$, the continuous function $\varphi$ attains its global maximum on $\Sigma$ at some point $p_0\in\Sigma$. Locally near $p_0\in\Sigma$ we have a parameterization $p=\psi(\theta)$, where $\theta$ runs through an open parameter set $\Theta\subset\R^{d-1}$. Defining $\tilde{\varphi}:=\varphi\circ\psi$, it follows that
$$0=(\partial_i \tilde{\varphi})(\theta_0)=a\cdot(\partial_i \psi)(\theta_0)=a\cdot \tau_i,$$
for all $i=1,\ldots,d-1$, where $\tau_i=\tau_i(\theta_0)$ form a basis of the tangent bundle $\mathsf{T}_{p_0}\Sigma$ of $\Sigma$ at $p_0$. Therefore, $a\perp \tau_i$ for each $i$ and since also $a\cdot \nu=0$, it follows that $u=a=0$. 
Consequently, $\omega-\Delta_{ps}$ is sectorial with spectral angle $\phi_{\omega-\Delta_{ps}}=0$ in $L_q(\Omega)^d$. 

For the operator $A_{ps}$ even more is true. We will show that in general $\sigma(A_{ps})\subset (-\infty,0]$. To this end, we start with the eigenvalue problem
\begin{align}
\begin{split}\label{eq:EVprb2}
\lambda u-\Delta u&=0,\quad x\in\Omega,\\
\div u&=0,\quad x\in\Omega,\\
u\cdot \nu&=0,\quad x\in\Sigma,\\
P_{\Sigma}\left((\nabla u-\nabla u^T)\nu\right)&=0,\quad x\in\Sigma.
\end{split}
\end{align}
Testing the first equation with $u$ yields
$$\lambda\|u\|_{L_2(\Omega)}^2-(\div(\nabla u-\nabla u^T)|u)_{L_2(\Omega)}=0,$$
since $\div u=0$ and therefore $\div\nabla u^T=0$. Integration by parts yields
$$\lambda\|u\|_{L_2(\Omega)}^2+((\nabla u-\nabla u^T),\nabla u)_{L_2(\Omega)}=((\nabla u-\nabla u^T)\nu|u)_{L_2(\Sigma)}=0,$$
by the boundary conditions in \eqref{eq:EVprb2}. Furthermore, it can be readily checked that
$$((\nabla u-\nabla u^T),\nabla u)_{L_2(\Omega)}=\frac{1}{2}\|\nabla u-\nabla u^T\|_{L_2(\Omega)}^2.$$
It follows that $\sigma(-A_{ps})\subset (-\infty,0]$ and if $\Omega$ is simply connected or if $L_\Sigma$ is negative semi-definite, then we even have $\sigma(-A_{ps})\subset (-\infty,0)$. Indeed, if $\lambda=0$, then $\nabla u=\nabla u^T$, hence we have in the first case $u=\nabla \varphi$ for some potential $\varphi$. Since $\div u=0$ and $u\cdot\nu=0$, the function $\varphi$ solves the Neumann problem
$$\Delta\varphi=0,\ x\in\Omega,\quad \partial_\nu\varphi=0,\ x\in\Sigma,$$
showing that $\varphi$ is constant, hence $u=0$. In the second case we make use of equation \eqref{eq:spec1}.

It follows that $\omega+A_{ps}$ is sectorial in $L_{q,\sigma}(\Omega)$ for any $\omega>0$ with spectral angle $\phi_{\omega+A_{ps}}=0$. We may now apply \cite[Corollary 3.3.15]{PS16} to conclude that for each $\omega>0$, the operator $\omega+A_{ps}$ admits a bounded $\cH^\infty$-calculus in $L_{q,\sigma}(\Omega)$ with $\cH^\infty$-angle $\phi_{\omega+A_{ps}}^\infty=0$. If $\Omega$ is simply connected or if $L_\Sigma$ is negative semi-definite, then one may set $\omega=0$.
\begin{The}
Let $1<q<\infty$ and $\Omega\subset\R^d$ open, bounded with boundary $\Sigma\in C^3$. Then, for each $\omega>0$, the (shifted) Stokes operator $\omega+A_{ps}$ with domain 
$$X_1=\{u\in H_q^2(\Omega)^d\cap L_{q,\sigma}(\Omega):P_{\Sigma}\left((\nabla u-\nabla u^T)\nu\right)=0\},$$
admits a bounded $\cH^\infty$-calculus in $X_0=L_{q,\sigma}(\Omega)$ with $\cH^\infty$-angle $\phi_{\omega+A_{ps}}^\infty=0$.

If $\Omega$ is simply connected or if $L_\Sigma(p)$ is negative semi-definite for each $p\in\Sigma$, then the same conclusions hold with $\omega=0$.
\end{The}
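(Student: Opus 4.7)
The plan is to reduce the claim to what has already been assembled in the text: the commutation $\PP\Delta_{ps}=\Delta_{ps}\PP$, the scalar $\mathcal{H}^\infty$-result of \cite{DDHPV04} for $\Delta_{ps}$ on $L_q(\Omega)^d$, and the spectral bounds from \eqref{eq:EVprb2}. The final step is then to combine a bounded $\mathcal{H}^\infty$-calculus with a small sectorial angle, which is exactly what \cite[Corollary 3.3.15]{PS16} allows.

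First I would fix an arbitrary $\phi>0$ and invoke \cite{DDHPV04} to get $\mu_\phi\ge 0$ such that $\mu_\phi-\Delta_{ps}\in\mathcal{H}^\infty(L_q(\Omega)^d)$ with $\mathcal{H}^\infty$-angle $\le\phi$. Because $\PP$ is a bounded projection on $L_q(\Omega)^d$ that commutes with the resolvent of $\Delta_{ps}$ (this is exactly the content of the decomposition $D(\Delta_{ps})=[D(\Delta_{ps})\cap R(\PP)]\oplus[D(\Delta_{ps})\cap N(\PP)]$ proved in Subsection \ref{subsec:PSBC}), it also commutes with every $f(\Delta_{ps})$ in the Dunford functional calculus. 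Restricting to $L_{q,\sigma}(\Omega)=R(\PP)$ therefore yields $\mu_\phi+A_{ps}\in\mathcal{H}^\infty(L_{q,\sigma}(\Omega))$ with the same angle $\le\phi$.

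Next I would upgrade this to obtain angle $0$ and allow the small shift $\omega>0$. The spectral analysis of \eqref{eq:EVprb2}, using
$$\lambda\|u\|_{L_2(\Omega)}^2+\tfrac{1}{2}\|\nabla u-\nabla u^T\|_{L_2(\Omega)}^2=0,$$
shows $\sigma(-A_{ps})\subset(-\infty,0]$ in general, and $\sigma(-A_{ps})\subset(-\infty,0)$ in the two special cases (the simply connected case reduces to a Neumann problem for a potential $\varphi$, the negative semi-definite case uses the already derived identity \eqref{eq:spec1}). Hence for every $\omega>0$ the operator $\omega+A_{ps}$ is invertible with spectrum contained in $(0,\infty)$, so its spectral angle is $0$; in the two exceptional cases the same holds for $\omega=0$.

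Finally I would apply \cite[Corollary 3.3.15]{PS16}: an operator which possesses a bounded $\mathcal{H}^\infty$-calculus with some angle $\phi>0$ and is sectorial with spectral angle $0$ has $\mathcal{H}^\infty$-angle $0$. Applying this to $\omega+A_{ps}$ (which, since $\omega+A_{ps}=(\omega-\mu_\phi)+(\mu_\phi+A_{ps})$ differs from $\mu_\phi+A_{ps}$ by a bounded shift preserving a bounded $\mathcal{H}^\infty$-calculus) gives $\phi^\infty_{\omega+A_{ps}}=0$, and likewise with $\omega=0$ in the two special cases. The only real obstacle is the bookkeeping around shifts: one must make sure the bounded $\mathcal{H}^\infty$-calculus obtained from \cite{DDHPV04} at a possibly large shift $\mu_\phi$ transfers to arbitrary $\omega>0$, but this follows because adding a bounded operator commuting with the resolvent does not destroy the calculus, and \cite[Corollary 3.3.15]{PS16} eliminates the shift altogether once the spectrum is in the open right half line.
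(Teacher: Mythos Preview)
Your proposal is correct and follows essentially the same route as the paper: the argument in Subsection~\ref{subsec:PSBC} leading up to the theorem consists precisely of (i) transferring the $\mathcal{H}^\infty$-calculus for $\mu_\phi-\Delta_{ps}$ from \cite{DDHPV04} to $\mu_\phi+A_{ps}$ via the commutation $\PP\Delta_{ps}=\Delta_{ps}\PP$, (ii) the spectral analysis of \eqref{eq:EVprb2} to obtain $\sigma(A_{ps})\subset[0,\infty)$ (with strict positivity in the two special cases), and (iii) invoking \cite[Corollary 3.3.15]{PS16} to push the $\mathcal{H}^\infty$-angle down to zero for $\omega+A_{ps}$. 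Your additional remarks on the shift bookkeeping make explicit what the paper leaves to that corollary.
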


\subsection{Interpolation-Extrapolation scales}\label{subsec:InExScale}

In this subsection, let $A_0:=\omega+A_{ps}$ and $1/q+1/q'=1$ for $q\in (1,\infty)$. By \cite[Theorems V.1.5.1 \& V.1.5.4]{Ama95}, the pair $(X_0,A_0)$ generates an interpolation-extrapolation scale $(X_\alpha,A_\alpha)$, $\alpha\in\mathbb{R}$ with respect to the complex interpolation functor. Note that for $\alpha\in (0,1)$, $A_\alpha$ is the $X_\alpha$-realization of $A_0$ (the restriction of $A_0$ to $X_\alpha$) and 
$$X_\alpha=D(A_0^\alpha).$$ 
Let $X_0^\sharp:=(X_0)'=L_{q',\sigma}(\Omega)$ and $A_0^\sharp:=(A_0)'=(\omega-\Delta_{ps})|_{L_{q',\sigma}(\Omega)}$ with $$D(A_0^\sharp)=:X_1^\sharp=\{u\in W_{q'}^2(\Omega)^d\cap L_{q',\sigma}(\Omega):P_{\Sigma}\left((\nabla u-\nabla u^T)\nu\right)=0\}.$$
Then $(X_0^\sharp,A_0^\sharp)$ generates an interpolation-extrapolation scale $(X_\alpha^\sharp,A_\alpha^\sharp)$, the dual scale, and by \cite[Theorem V.1.5.12]{Ama95}
$$(X_\alpha)'=X^\sharp_{-\alpha}\quad\text{and}\quad (A_\alpha)'=A^\sharp_{-\alpha}$$
for $\alpha\in \mathbb{R}$. In the very special case $\alpha=-1/2$, we obtain an operator $A_{-1/2}:X_{1/2}\to X_{-1/2}$, where
$$X_{1/2}=D(A_0^{1/2})=[X_0,X_1]_{1/2}=H_{q}^1(\Omega)^d\cap L_{q,\sigma}(\Omega),$$
$X_{-1/2}=(X_{1/2}^\sharp)'$ (by reflexivity) and, since also $A_0^\sharp\in \cH^\infty(X_0^\sharp)$ with $\phi_{A_0^\sharp}^\infty=0$,
$$X_{1/2}^\sharp=D((A_0^\sharp)^{1/2})=[X_0^\sharp,X_1^\sharp]_{1/2}
=H_{q'}^1(\Omega)^d\cap L_{p',\sigma}(\Omega).$$
Moreover, we have $A_{-1/2}=(A_{1/2}^\sharp)'$ and $A_{1/2}^\sharp$ is the restriction of $A_0^\sharp$ to $X_{1/2}^\sharp$. It follows from \cite[Proposition 3.3.14]{PS16} that the operator 
$A_{-1/2}:X_{1/2}\to X_{-1/2}$
has a bounded $\cH^\infty$-calculus with $\cH^\infty$-angle $\phi_{A_{-1/2}}^\infty=0$. We call the operator $A_{-1/2}$ the \textbf{weak Stokes operator} subject to perfect slip boundary conditions.

Since $A_{-1/2}$ is the closure of $A_0$ in $X_{-1/2}$ it follows that $A_{-1/2}u=A_0u$ for $u\in X_1$ and thus, for all $v\in X_{1/2}^\sharp$,
$$\langle A_{-1/2}u,v\rangle=(A_0u|v)_{L_2(\Omega)}=\omega\int_\Omega u\cdot v\ dx+\int_\Omega \nabla u:\nabla v\ dx-\int_{\partial\Omega}L_\Sigma u\cdot v\ dx,$$
where we made use of integration by parts. Using that $X_1$ is dense in $X_{1/2}$, we obtain the identity
\begin{equation}\label{eq:weakStokes}
\langle A_{-1/2}u,v\rangle=\omega\int_\Omega u\cdot v\ dx+\int_\Omega \nabla u:\nabla v\ dx-\int_{\partial\Omega}L_\Sigma u\cdot v\ dx,
\end{equation}
valid for all $(u,v)\in X_{1/2}\times X_{1/2}^\sharp$.

We will now compute the spaces $X_\alpha$, $\alpha\in [-\frac{1}{2},\frac{1}{2}]$. To this end, for $s\in [0,1]$ and $q\in (1,\infty)$, we define
$$H_{q,\sigma}^s(\Omega):=H_q^s(\Omega)^d\cap L_{q,\sigma}(\Omega)$$
and
$$H_{q,\sigma}^{-s}(\Omega):=({H}_{q',\sigma}^s(\Omega))'.$$
Note, that for $s\in [0,1]$ and $s\neq 1/q'$
\begin{equation}
({H}_{q',\sigma}^s(\Omega))'=(_{\perp}{H}_{q'}^s(\Omega)^d)'\cap R({\PP}^*)
\end{equation}
where 
$$_{\perp}{H}_{q'}^s(\Omega)^d=
\begin{cases}
\{u\in H_{q'}^s(\Omega)^d:u\cdot\nu=0\}&,\ s\in (1/q',1],\\
H_{q'}^s(\Omega)^d&,\ s\in[0,1/q'),
\end{cases}$$
and
${\PP}^*$ denotes the dual of the restriction of $\PP$ to $_{\perp}{H}_{q'}^s(\Omega)^d$.

From \cite[Theorem V.1.5.4]{Ama95} we know that $X_{\alpha}=[X_{-1/2},X_{1/2}]_{\alpha+\frac{1}{2}}$ for all $\alpha\in [-\frac{1}{2},\frac{1}{2}]$ and by the reiteration theorem for the complex method
$$[X_0,X_{1/2}]_{s}=[[X_{-1/2},X_{1/2}]_{\frac{1}{2}},X_{1/2}]_{s}=[X_{-1/2},X_{1/2}]_{\frac{s}{2}+\frac{1}{2}}.$$
This in turn implies
$$X_{\frac{s}{2}}=[X_0,X_{1/2}]_{s}=H_{q,\sigma}^s(\Omega),$$
for all $s\in [0,1]$.
Since, by reflexivity, $X_{-\alpha}=(X_\alpha^\sharp)'$ for $\alpha\in [0,\frac{1}{2}]$, this yields the following result.
\begin{Pro}\label{pro:complexInt}
Let $\beta\in [0,1]$ and $q\in (1,\infty)$. Then $$[X_{-1/2},X_{1/2}]_{\beta}={H}_{q,\sigma}^{2\beta-1}(\Omega).$$
\end{Pro}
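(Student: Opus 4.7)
The plan is to identify $X_\alpha$ for every $\alpha\in[-\tfrac12,\tfrac12]$ with $H_{q,\sigma}^{2\alpha}(\Omega)$, and then invoke the already-noted identity
$$X_\alpha=[X_{-1/2},X_{1/2}]_{\alpha+\frac{1}{2}},\qquad \alpha\in[-\tfrac12,\tfrac12],$$
coming from \cite[Theorem V.1.5.4]{Ama95}. Setting $\alpha=\beta-\tfrac12$ then yields the asserted equality $[X_{-1/2},X_{1/2}]_\beta=H_{q,\sigma}^{2\beta-1}(\Omega)$.

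For the positive part $\alpha\in[0,\tfrac12]$, the identification $X_\alpha=H_{q,\sigma}^{2\alpha}(\Omega)$ is precisely the computation just performed in the excerpt: using $X_\alpha=D(A_0^\alpha)$ and the $\mathcal{H}^\infty$-calculus of $A_0$ with angle $0$, one has $X_{1/2}=[X_0,X_1]_{1/2}=H_{q,\sigma}^1(\Omega)$, and then reiteration of the complex method gives $X_{s/2}=H_{q,\sigma}^s(\Omega)$ for $s\in[0,1]$.

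For the negative part $\alpha\in[0,\tfrac12]$ (i.e.\ the identification of $X_{-\alpha}$), I would apply the \emph{same} chain of reasoning to the dual scale. Since $A_0^\sharp\in\mathcal{H}^\infty(X_0^\sharp)$ with $\phi_{A_0^\sharp}^\infty=0$, the argument above applied to $(X_0^\sharp,A_0^\sharp)$ in place of $(X_0,A_0)$ gives
$$X_\alpha^\sharp=H_{q',\sigma}^{2\alpha}(\Omega)\quad\text{for}\quad\alpha\in[0,\tfrac12].$$
Combining this with the duality formula $X_{-\alpha}=(X_\alpha^\sharp)'$ (reflexivity is available since $q\in(1,\infty)$) and with the very definition
$$H_{q,\sigma}^{-s}(\Omega):=(H_{q',\sigma}^s(\Omega))'$$
used in the excerpt, one obtains
$$X_{-\alpha}=(X_\alpha^\sharp)'=(H_{q',\sigma}^{2\alpha}(\Omega))'=H_{q,\sigma}^{-2\alpha}(\Omega),\qquad \alpha\in[0,\tfrac12].$$

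The main technical point to double-check is the duality step $(H_{q',\sigma}^{2\alpha}(\Omega))'=H_{q,\sigma}^{-2\alpha}(\Omega)$, since $H_{q',\sigma}^s(\Omega)=H_{q'}^s(\Omega)^d\cap L_{q',\sigma}(\Omega)$ involves both a Sobolev condition and the divergence/trace constraints encoded in $L_{q',\sigma}(\Omega)$. For $s\in[0,1]$ with $s\neq 1/q'$ the relevant characterization is exactly the identity
$$(H_{q',\sigma}^s(\Omega))'=({_\perp H}_{q'}^s(\Omega)^d)'\cap R(\mathbb{P}^*)$$
stated just above the proposition, so this is consistent with the definition of $H_{q,\sigma}^{-s}(\Omega)$ used throughout. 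Putting the two halves together gives $X_\alpha=H_{q,\sigma}^{2\alpha}(\Omega)$ on the whole interval $[-\tfrac12,\tfrac12]$, and the proposition follows from the reiteration identity after the substitution $\beta=\alpha+\tfrac12$.
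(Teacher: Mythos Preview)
Your proposal is correct and follows essentially the same route as the paper: identify $X_{s/2}=H_{q,\sigma}^{s}(\Omega)$ for $s\in[0,1]$ via reiteration, then obtain the negative half by duality $X_{-\alpha}=(X_\alpha^\sharp)'$ together with the definition $H_{q,\sigma}^{-s}(\Omega):=(H_{q',\sigma}^{s}(\Omega))'$. Your write-up merely makes explicit the intermediate step $X_\alpha^\sharp=H_{q',\sigma}^{2\alpha}(\Omega)$, which the paper leaves implicit.
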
 
\noindent
We will also need the real interpolation spaces $(X_{-1/2},X_{1/2})_{\theta,q}$. For $s\in(0,1)$ and $p,q\in (1,\infty)$, we define
$$B_{qp,\sigma}^s(\Omega):=B_{qp}^s(\Omega)^d\cap L_{q,\sigma}(\Omega)$$
and
$$B_{qp,\sigma}^{-s}(\Omega):=(B_{q'p',\sigma}^{s}(\Omega))'.$$ 
The reiteration theorem for the real and the complex method implies
$$(X_0,X_{1/2})_{s,p}=([X_{-1/2},X_{1/2}]_{\frac{1}{2}},X_{1/2})_{s,p}=(X_{-1/2},X_{1/2})_{\frac{1+s}{2},p}$$
and therefore
$$(X_{-1/2},X_{1/2})_{\theta,p}=(X_0,X_{1/2})_{2\theta-1,p}={B}_{qp,\sigma}^{2\theta-1}(\Omega),$$
for all $\theta\in (1/2,1)$. Furthermore, by duality and reflexivity
$$(X_{-1/2},X_{1/2})_{\theta,p}=
((X_{-1/2}^\sharp,X_{1/2}^\sharp)_{1-\theta,p'})'=({B}_{q'p',\sigma}^{2(1-\theta)-1}(\Omega))'={B}_{qp,\sigma}^{2\theta-1}(\Omega)$$
for all $\theta\in (0,1/2)$. To include the case $\theta=1/2$, we define
$$B_{qp,\sigma}^0(\Omega):=(X_{-1/2},X_{1/2})_{\frac{1}{2},p}.$$
Then we have the following result.
\begin{Pro}\label{pro:2.4}
Let $\theta\in (0,1)$ and $p,q\in (1,\infty)$. Then $$(X_{-1/2},X_{1/2})_{\theta,p}={B}_{qp,\sigma}^{2\theta-1}(\Omega).$$
\end{Pro}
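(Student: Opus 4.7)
The plan is to split according to the sign of $2\theta-1$: for $\theta>1/2$ I would argue directly via reiteration, for $\theta<1/2$ I would pass to the dual scale, and for $\theta=1/2$ there is nothing to do.

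For $\theta\in(1/2,1)$, I would first invoke Proposition \ref{pro:complexInt} with $\beta=1/2$ to get $[X_{-1/2},X_{1/2}]_{1/2}=X_0$. The reiteration theorem for the real method with a complex inner endpoint then yields
$$(X_{-1/2},X_{1/2})_{\theta,p}=\bigl([X_{-1/2},X_{1/2}]_{1/2},X_{1/2}\bigr)_{2\theta-1,p}=(X_0,X_{1/2})_{2\theta-1,p},$$
so it remains to identify $(X_0,X_{1/2})_{s,p}$ with $B^s_{qp,\sigma}(\Omega)$ for $s\in(0,1)$. Since $\PP$ is a retraction from $L_q(\Omega)^d$ onto $L_{q,\sigma}(\Omega)=X_0$ and, via the Neumann problem used to build it in the perfect-slip setting, also from $H_q^1(\Omega)^d$ onto $X_{1/2}=H_{q,\sigma}^1(\Omega)$, the standard retraction--coretraction argument combined with the scalar identity $(L_q(\Omega)^d,H_q^1(\Omega)^d)_{s,p}=B^s_{qp}(\Omega)^d$ transfers real interpolation to the solenoidal subspaces and gives $B^s_{qp}(\Omega)^d\cap L_{q,\sigma}(\Omega)=B^s_{qp,\sigma}(\Omega)$.

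For $\theta\in(0,1/2)$, I would use reflexivity of the interpolation--extrapolation scale together with the duality statement \cite[Theorem V.1.5.12]{Ama95} to write
$$(X_{-1/2},X_{1/2})_{\theta,p}=\bigl((X_{-1/2}^\sharp,X_{1/2}^\sharp)_{1-\theta,p'}\bigr)'.$$
Since $1-\theta\in(1/2,1)$, the previous step applied to the dual scale (with $(q,p)$ replaced by $(q',p')$) identifies the inner interpolation space with $B^{2(1-\theta)-1}_{q'p',\sigma}(\Omega)$, and the very definition $B^{-s}_{qp,\sigma}(\Omega):=(B^s_{q'p',\sigma}(\Omega))'$ then produces $B^{2\theta-1}_{qp,\sigma}(\Omega)$. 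The remaining value $\theta=1/2$ coincides with the definition of $B^0_{qp,\sigma}(\Omega)$ adopted in the text.

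The main obstacle I anticipate is the identification $(X_0,X_{1/2})_{s,p}=B^s_{qp,\sigma}(\Omega)$: the retraction--coretraction step needs $\PP\in\mathcal{L}(H_q^1(\Omega)^d,X_{1/2})$, which is not automatic for general Helmholtz projections. In the present perfect-slip framework, however, $\PP$ is built from the Neumann problem for the Laplacian, whose $H^2$-regularity on the $C^3$-domain $\Omega$ controls the gradient part of the Helmholtz decomposition in $H^1$, so that the required boundedness does hold.
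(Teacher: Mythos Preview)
Your proposal is correct and follows essentially the same route as the paper: reiteration to reduce $(X_{-1/2},X_{1/2})_{\theta,p}$ to $(X_0,X_{1/2})_{2\theta-1,p}$ for $\theta>1/2$, duality for $\theta<1/2$, and the adopted definition for $\theta=1/2$. The only difference is that the paper simply asserts $(X_0,X_{1/2})_{s,p}=B^s_{qp,\sigma}(\Omega)$ as known, whereas you supply a retraction--coretraction justification via the $H_q^1$-boundedness of $\PP$; this extra detail is fine, though note that the boundedness of $\PP$ on $H_q^1$ comes from the $H_q^2$-regularity of the Neumann problem on the $C^3$-domain and has nothing specifically to do with the perfect-slip boundary conditions.
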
 
\begin{Rem}\mbox{}
\begin{enumerate}
\item For $p=q=2$ and all $s\in (-1,1)$ one has
$${H}_{2,\sigma}^{s}(\Omega)={B}_{22,\sigma}^{s}(\Omega),$$
since in this case $[X_{-1/2},X_{1/2}]_{\theta}=(X_{-1/2},X_{1/2})_{\theta,2}$.
\item It can be shown that for all $p,q\in(1,\infty)$
$$B_{qp,\sigma}^0(\Omega)=\overline{\{u\in C_c^\infty(\Omega)^d:\operatorname{div}u=0\}}^{B_{qp}^0(\Omega)^d},$$
see e.g.\ \cite[Proof of Proposition 3.4]{Ama02}. For $p\ge q\ge 2$, this in turn implies
$$L_{q,\sigma}(\Omega)=\overline{\{u\in C_c^\infty(\Omega)^d:\operatorname{div}u=0\}}^{L_{q}(\Omega)^d}\subset B_{qp,\sigma}^0(\Omega),$$
by the embedding $L_q(\Omega)\hookrightarrow B_{qp}^0 (\Omega)$.
\end{enumerate}
\end{Rem}

\section{Navier boundary conditions}\label{sec:NavierBC}

\subsection{The Stokes operator subject to Navier boundary conditions}\label{subsec:StokesNBC}

We consider the problem
\begin{align}
\begin{split}\label{eq:NSpartslip}
\partial_t u-\Delta u+u\cdot\nabla u+\nabla\pi&=0,\quad t>0,\ x\in\Omega,\\
\div u&=0,\quad t>0,\ x\in\Omega,\\
u\cdot \nu&=0,\quad t>0,\ x\in\Sigma,\\
P_{\Sigma}\left((\nabla u+\nabla u^T)\nu\right)+\alpha u&=0,\quad t>0,\ x\in\Sigma,\\
u(0)&=u_0,\quad t=0,\ x\in\Omega,
\end{split}
\end{align}
where $\alpha\ge 0$ is the friction coefficient. Defining $A_N:=-\mathbb{P}\Delta:X_1\to X_0$ with domain
$$X_1=D(A_N)=\{u\in H_q^2(\Omega)^d\cap L_{q,\sigma}(\Omega):P_{\Sigma}\left((\nabla u+\nabla u^T)\nu\right)+\alpha u=0\ \text{on}\ \Sigma\}$$
in $X_0=L_{q,\sigma}(\Omega)$, we may rewrite \eqref{eq:NSpartslip} in the condensed form 
\begin{equation}\label{eq:strongCP}
\partial_t u+A_N u=F(u),\ t>0,\quad u(0)=u_0,
\end{equation}
where $F(u):=-\PP(u\cdot\nabla u)$.
We note on the go that the operator $A_N$ has the property of maximal $L_{p,\mu}$-regularity, see e.g. \cite{BKP13}.

\subsection{The weak Stokes operator subject to Navier boundary conditions}\label{subsec:weakHinftyNavier}

In this subsection, we will derive a weak formulation of \eqref{eq:strongCP}. By the same computations as in the proof of Lemma \ref{lem:divfree} we obtain
$$0=P_{\Sigma}\left((\nabla u+\nabla u^T)\nu\right)+\alpha u=\partial_\nu u_{\|}+L_\Sigma u_{\|}+\alpha u_{\|}.$$
Let $\phi\in H_{q'}^1(\Omega)^d$ such that $\div \phi=0$ and $\phi\cdot\nu=0$. Testing the first equation in \eqref{eq:NSpartslip} with $\phi$ and integrating by parts yields
\begin{align*}
0&=(\partial_t u|\phi)_{L_2(\Omega)}+(\nabla u|\nabla\phi)_{L_2(\Omega)}-(\partial_\nu u_{\|}|\phi_{\|})_{L_2(\Sigma)}+(u\cdot\nabla u|\phi)_{L_2(\Omega)}\\
&=(\partial_t u|\phi)_{L_2(\Omega)}+(\nabla u|\nabla\phi)_{L_2(\Omega)}+(L_\Sigma u_{\|}+\alpha u_{\|}|\phi_{\|})_{L_2(\Sigma)}-((u\otimes u)|\nabla\phi)_{L_2(\Omega)}.
\end{align*}
Defining an operator $A_{N,w}:X_{1/2}\to X_{-1/2}$ by means of
\begin{equation}\label{eq:weakStokes1.1}
\langle A_{N,w}v,\phi\rangle =(\nabla v|\nabla\phi)_{L_2(\Omega)}+(L_\Sigma v_{\|}+\alpha v_{\|}|\phi_{\|})_{L_2(\Sigma)},
\end{equation}
with domain $X_{1/2}=H_{q,\sigma}^1(\Omega)$, we obtain the weak formulation 
\begin{equation}\label{eq:weakstokes1.2}
\partial_t u+A_{N,w}u=F_w(u)
\end{equation}
of \eqref{eq:strongCP} in the space $X_{-1/2}=H_{q,\sigma}^{-1}(\Omega)$ with initital condition $u(0)=u_0$, where
$$\langle F_w(u),\phi\rangle:=(u\otimes u,\nabla\phi)_{L_2(\Omega)}.$$
We call the operator $A_{N,w}$ the \textbf{weak Stokes operator} subject to Navier boundary conditions.
Comparing \eqref{eq:weakStokes1.1} with equation \eqref{eq:weakStokes} implies
$$\langle (\omega+A_{N,w})v,\phi\rangle=\langle A_{-1/2}v,\phi\rangle +(2L_\Sigma v_{\|}+\alpha v_{\|}|\phi_{\|})_{L_2(\Sigma)}.$$
Observe that for $s\in (1/q,1]$ and $(v,\phi)\in X_{s/2}\times X_{1/2}^\sharp$ with $X_{s/2}=H_{q,\sigma}^s(\Omega)^d$,
$$(2L_\Sigma v_{\|}+\alpha v_{\|}|\phi_{\|})_{L_2(\Sigma)}\le C\|v\|_{L_q(\Sigma)}\|\phi\|_{L_{q'}(\Sigma)}\le C\|v\|_{H_q^s(\Omega)}\|\phi\|_{H_{q'}^1(\Omega)},$$
by H\"older's inequality and trace theory. Therefore, the linear operator operator $B:X_{s/2}\to X_{-1/2}$, given by
$$\langle Bv,\phi\rangle=(2L_\Sigma v_{\|}+\alpha v_{\|}|\phi_{\|})_{L_2(\Sigma)},$$
is well defined and, if in addition $s\in (1/q,1)$, it is a lower order perturbation of $A_{-1/2}:X_{1/2}\to X_{-1/2}$. Since $\phi_{A_{-1/2}^\infty}=0$, it follows from \cite[Corollary 3.3.15]{PS16} that there exists $\omega_0>0$ such that
$$\omega+A_{N,w}\in\mathcal{H}^\infty(X_{-1/2})$$
with $\phi^\infty_{\omega+A_{N,w}}\le\max\{\phi_{A_{-1/2}}^\infty,\phi_{\omega+A_{N,w}}\}=\phi_{\omega+A_{N,w}}$, provided $\omega\ge\omega_0$. 

We will now compute the spectrum of $A_{N,w}$. To this end, we assume for a moment that 
$$v\in X_1=\{u\in H_q^2(\Omega)^d\cap L_{q,\sigma}(\Omega):P_{\Sigma}\left((\nabla u-\nabla u^T)\nu\right)=0\}.$$
Then we may integrate by parts twice to the result
\begin{equation}\label{eq:weakStokes2}
\langle A_{N,w}v,\phi\rangle =(D(v)|D(\phi))_{L_2(\Omega)}+\alpha(v_{\|}|\phi_{\|})_{L_2(\Sigma)},
\end{equation}
where we used $\div (\nabla v^T)=0$ and $D(v):=\nabla v+\nabla v^T$. By density of $X_1$ in $X_{1/2}$ this formula holds for all $v\in X_{1/2}$ and $\phi\in X_{1/2}^\sharp=H_{q',\sigma}^1(\Omega)$. 

Since $X_{1/2}=H_{q,\sigma}^1(\Omega)$ is compactly embedded into $X_{-1/2}=H_{q,\sigma}^{-1}(\Omega)$, the spectrum $\sigma(A_{N,w})$ is independent of $q$ and it consists solely of isolated eigenvalues. Thus, we obtain from equation \eqref{eq:weakStokes2} and Korns inequality that $\sigma(A_{N,w})\subset[0,\infty)$ for $\alpha\ge 0$ and $\sigma(A_{N,w})\subset(0,\infty)$ for $\alpha>0$. It follows that for $\alpha\ge 0$ and any $\omega>0$ the operator $\omega+A_{N,w}$ is sectorial with spectral angle $\phi_{\omega+A_{N,w}}=0$ and in case $\alpha>0$ one may set $\omega=0$. This in turn implies $\phi^\infty_{\omega+A_{N,w}}=0$ (see above).
Applying \cite[Corollary 3.3.15]{PS16} a second time, we see that for $\alpha\ge 0$ and \emph{any} $\omega>0$ it holds that
$$\omega+A_{N,w}\in\mathcal{H}^\infty(X_{-1/2})$$
with $\phi^\infty_{\omega+A_{N,w}}=0$ and in case $\alpha>0$ one may even set $\omega=0$. 

\subsection{Critical spaces for the nonlinear problem}

We are now in a situation to apply Theorem \ref{thm:IntroThm} to \eqref{eq:weakstokes1.2} with the choice $X_0^w=X_{-1/2}$ and $X_1^w=X_{1/2}$. It remains to show that the nonlinearity $F_w:X_\beta^w\to X_{-1/2}$ is well defined, where 
$$X_\beta^w=D(A_{N,w}^\beta)=[X_0^w,X_1^w]_{\beta}=H_{q,\sigma}^{2\beta-1}(\Omega),\ \beta\in (0,1).$$
By Sobolev embedding, we have $H_q^{2\beta-1}(\Omega)\hookrightarrow L_{2q}(\Omega)$ provided that $2\beta-1\ge \frac{d}{2q}$; note that this embedding is sharp. From now on, we assume $2\beta-1=\frac{d}{2q}$, which requires $q>d/2$ as $\beta<1$. Then the mapping 
$$[u\mapsto u\otimes u]:H_{q}^{2\beta-1}(\Omega)^d\to L_{q}(\Omega)^{d\times d}$$
is well defined and by H\"older's inequality, we obtain
$$\left((u\otimes u),\nabla\phi\right)_{L_2(\Omega)}\le \|u\|_{L_{2q}(\Omega)}^2\|\nabla\phi\|_{L_{q'}(\Omega)},$$
which shows that the nonlinear mapping $F_w:X_{\beta}^w\to X_{0}^w$
is well-defined, too. 

If $2\beta-1=d/2q$, the critical weight $\mu_c\in (1/p,1]$ is given by $\mu_c=1/p+d/2q$ and the corresponding critical trace space in the weak setting reads
$$X_{\gamma,\mu_c}^w=(X_0^w,X_1^w)_{\mu_c-1/p,p}=B_{qp,\sigma}^{d/q-1}(\Omega).$$
Theorem \ref{thm:IntroThm} yields the following existence and uniqueness result for \eqref{eq:weakstokes1.2}.
\begin{The}\label{thm:weakNavier}
Let $p\in (1,\infty)$ and $q\in (d/2,\infty)$ such that $\frac{1}{p}+\frac{d}{2q}\le 1$. For any $u_0\in B_{qp,\sigma}^{d/q-1}(\Omega)$ there exists a unique solution 
$$u\in H_{p,\mu_c}^1(0,a;H_{q,\sigma}^{-1}(\Omega))\cap L_{p,\mu_c}(0,a;H_{q,\sigma}^1(\Omega))$$
of \eqref{eq:weakstokes1.2} for some $a=a(u_0)>0$, with $\mu_c=1/p+d/2q$. The solution exists on a maximal time interval $[0,t_+(u_0))$ and depends continuously on $u_0$. In addition, we have
$$u\in C([0,t_+);B_{qp,\sigma}^{d/q-1}(\Omega))\cap C((0,t_+);B_{qp,\sigma}^{1-2/p}(\Omega)),$$
which means that the solution regularizes instantly, provided $1/p+d/2q<1$.
\end{The}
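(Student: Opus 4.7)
The plan is to verify the hypotheses of Theorem \ref{thm:IntroThm} for the weak Cauchy problem \eqref{eq:weakstokes1.2} with the concrete choices $X_0^w=H_{q,\sigma}^{-1}(\Omega)$, $X_1^w=H_{q,\sigma}^1(\Omega)$, $A=A_{N,w}$ (shifted by $\omega$ if necessary), and $G(u,v)=F_w(u,v)$ corresponding to the bilinear form $\langle F_w(u,v),\phi\rangle:=(u\otimes v\mid\nabla\phi)_{L_2(\Omega)}$. Three ingredients must be assembled: the $\mathcal{B}IP$ property (with power angle $<\pi/2$) of the linear part, the identification of $X_\beta^w=D(A_{N,w}^\beta)$ and of the real interpolation trace space, and the bilinear estimate for $F_w$ at the critical exponent.

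First, Subsection \ref{subsec:weakHinftyNavier} already gives $\omega+A_{N,w}\in\mathcal{H}^\infty(X_{-1/2})$ with $\mathcal{H}^\infty$-angle $0$ for any $\omega>0$ (and even $\omega=0$ if $\alpha>0$). This of course implies $\omega+A_{N,w}\in\mathcal{B}IP$ with power angle $0$. For $\alpha=0$ I would absorb the shift by rewriting \eqref{eq:weakstokes1.2} as $\partial_t u+(\omega+A_{N,w})u=F_w(u)+\omega u$, noting that the linear perturbation $\omega u$ is a bounded map $X_{1/2}\to X_{-1/2}$ of lower order and can be incorporated into the abstract result (as in \cite{PW16}). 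Since $X_0^w$ and $X_1^w$ are unchanged by the shift and the perturbation $B$ relating $\omega+A_{N,w}$ to $A_{-1/2}$ is of lower order, the complex interpolation spaces agree with those computed in Proposition \ref{pro:complexInt}, giving $X_\beta^w=[X_{-1/2},X_{1/2}]_\beta=H_{q,\sigma}^{2\beta-1}(\Omega)$ for $\beta\in(0,1)$, and similarly the real interpolation spaces coincide with those in Proposition \ref{pro:2.4}.

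Next, I select $\beta\in(1/2,1)$ by $2\beta-1=d/(2q)$, which is admissible precisely when $q>d/2$. By the Sobolev embedding $H_q^{d/(2q)}(\Omega)\hookrightarrow L_{2q}(\Omega)$ (sharp at this exponent) and H\"older's inequality, for $u,v\in X_\beta^w$,
\begin{equation*}
|\langle F_w(u,v),\phi\rangle|\le \|u\|_{L_{2q}}\|v\|_{L_{2q}}\|\nabla\phi\|_{L_{q'}}\lesssim \|u\|_{X_\beta^w}\|v\|_{X_\beta^w}\|\phi\|_{X_{1/2}^\sharp},
\end{equation*}
so $F_w:X_\beta^w\times X_\beta^w\to X_0^w$ is bilinear and bounded. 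The critical weight prescribed by \eqref{eq:IntroEqBeta} is $\mu_c=2\beta-1+1/p=1/p+d/(2q)$, and the constraint $\mu_c\le 1$ is exactly $1/p+d/(2q)\le 1$. The critical trace space is then
\begin{equation*}
X_{\gamma,\mu_c}^w=(X_0^w,X_1^w)_{d/(2q),p}=B_{qp,\sigma}^{d/q-1}(\Omega)
\end{equation*}
by Proposition \ref{pro:2.4}. Theorem \ref{thm:IntroThm} now yields local existence, uniqueness and continuous dependence on initial data in $B_{qp,\sigma}^{d/q-1}(\Omega)$; the maximal interval $[0,t_+(u_0))$ is obtained by the standard Zorn/continuation argument, using uniqueness to extend any local solution as long as it stays in the solution class.

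Finally, the regularization statement is a consequence of the semiflow generated by \eqref{eq:weakstokes1.2}. The embedding in \eqref{eq:solclass} gives $u\in C([0,t_+);X_{\gamma,\mu_c}^w)=C([0,t_+);B_{qp,\sigma}^{d/q-1}(\Omega))$. If $1/p+d/(2q)<1$ then $\mu_c<1$, and a now-standard bootstrap in time weights (Pr\"uss--Simonett \cite{PS04}; see also \cite{PW16}) shows that $u(t)\in X_{\gamma,1}=(X_0^w,X_1^w)_{1-1/p,p}=B_{qp,\sigma}^{1-2/p}(\Omega)$ for every $t\in(0,t_+)$, with the map $t\mapsto u(t)$ continuous into this stronger space; concretely, one picks any $t_1\in(0,t_+)$, applies the local solvability in the unweighted class $\mu=1$ starting from $u(t_1)$, and uses uniqueness to identify the two solutions. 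The principal obstacle is the first step, namely confirming that the identifications of complex and real interpolation spaces derived in Section \ref{sec:PSBC} for the perfect-slip operator $A_{-1/2}$ carry over to the Navier weak Stokes operator $A_{N,w}$, which requires controlling the lower-order boundary perturbation $B$; this is precisely what the $\mathcal{H}^\infty$-calculus with angle $0$ for $\omega+A_{N,w}$ buys us.
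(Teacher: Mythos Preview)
Your proposal is correct and follows essentially the same route as the paper: verify that $\omega+A_{N,w}\in\mathcal{H}^\infty(X_{-1/2})$ with angle $0$ (done in Subsection~\ref{subsec:weakHinftyNavier}), identify $X_\beta^w=H_{q,\sigma}^{2\beta-1}(\Omega)$ and the trace space via Propositions~\ref{pro:complexInt} and~\ref{pro:2.4}, choose $2\beta-1=d/(2q)$, bound $F_w$ by H\"older plus the sharp Sobolev embedding $H_q^{d/(2q)}\hookrightarrow L_{2q}$, and invoke Theorem~\ref{thm:IntroThm}. Your added remarks on absorbing the shift $\omega$ when $\alpha=0$ and on why the interpolation identifications transfer from $A_{-1/2}$ to $A_{N,w}$ (same $X_0^w$, $X_1^w$; $\mathcal{H}^\infty$-calculus gives $D(A_{N,w}^\beta)=[X_0^w,X_1^w]_\beta$) make explicit what the paper leaves implicit, but do not constitute a different argument.
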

Concerning the global well-posedness of \eqref{eq:weakstokes1.2} for small initial data, we have the following result.
\begin{Cor}
Let the conditions of Theorem \ref{thm:weakNavier} be satisfied. Then, for any $a>0$ there exists $r(a)>0$ such that the solution $u$ of \eqref{eq:weakstokes1.2} exists on $[0,a]$, provided $\|u_0\|_{B_{qp,\sigma}^{d/q-1}}\le r(a)$.

If the friction coefficient $\alpha>0$, then $r$ is independent of $a$.
\end{Cor}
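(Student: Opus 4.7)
The proof is a Banach fixed point argument in the maximal regularity space
$$\EE_{1,\mu_c}(0,a) := H^1_{p,\mu_c}(0,a;X_0^w)\cap L_{p,\mu_c}(0,a;X_1^w),$$
mirroring the construction of Theorem \ref{thm:weakNavier} but tracking the dependence of the existence time on the size of the initial datum. By the $\mathcal{H}^\infty$-calculus for $\omega + A_{N,w}$ established in Subsection \ref{subsec:weakHinftyNavier} (valid for any $\omega>0$ if $\alpha\ge 0$, and for $\omega=0$ if $\alpha>0$), the operator $A_{N,w}$ enjoys maximal $L_{p,\mu_c}$-regularity on $[0,a]$. Decompose $u = v + w$, where $v$ solves the linear homogeneous Cauchy problem $\partial_t v + A_{N,w}v = 0$, $v(0)=u_0$, so that
$$\|v\|_{\EE_{1,\mu_c}(0,a)} \le M_0(a)\|u_0\|_{B_{qp,\sigma}^{d/q-1}(\Omega)}.$$
The correction $w$ then satisfies $w = \Phi(w)$, with $\Phi(w)$ the maximal-regularity solution of $\partial_t z + A_{N,w}z = F_w(v+w)$, $z(0)=0$.

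The analytic heart of the argument is the bilinear estimate
$$\|F_w(\varphi,\psi)\|_{X_0^w} \le C\|\varphi\|_{X_\beta^w}\|\psi\|_{X_\beta^w}, \qquad \beta=\tfrac{1}{2}+\tfrac{d}{4q},$$
which follows from the computation in the preceding subsection via H\"older's inequality and the critical Sobolev embedding $H_q^{d/(2q)}(\Omega)\hookrightarrow L_{2q}(\Omega)$, combined with the time-weighted mixed-derivative embedding
$$\EE_{1,\mu_c}(0,a) \hookrightarrow L_{2p,\mu_c}(0,a;X_\beta^w),$$
consistent with the critical identity $2\beta - 1 = \mu_c - 1/p$. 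Together with the maximal regularity bound $\|\Phi(w)\|_{\EE_{1,\mu_c}} \le M_1(a)\|F_w(v+w)\|_{L_{p,\mu_c}(0,a;X_0^w)}$ these estimates yield
$$\|\Phi(w)\|_{\EE_{1,\mu_c}(0,a)} \le CM_1(a)\bigl(M_0(a)\|u_0\|_{B_{qp,\sigma}^{d/q-1}} + \|w\|_{\EE_{1,\mu_c}(0,a)}\bigr)^2,$$
together with an analogous Lipschitz bound. Choosing the ball radius $R := 1/(4CM_1(a))$ and requiring $\|u_0\|_{B_{qp,\sigma}^{d/q-1}} \le r(a) := R/(2M_0(a))$, both the self-mapping and the contraction conditions are met on $\overline{B}_R \subset \EE_{1,\mu_c}(0,a)$, and Banach's fixed point theorem produces the claimed solution on $[0,a]$.

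When $\alpha > 0$, Subsection \ref{subsec:weakHinftyNavier} gives $\sigma(A_{N,w}) \subset (0,\infty)$, so $e^{-A_{N,w}t}$ is exponentially stable and no shift is needed in the maximal regularity statement. Consequently the constants $M_0$, $M_1$ and the embedding constant above can all be chosen uniformly in $a \in (0,\infty)$, and thus $r(a)$ collapses to an $a$-independent $r>0$, yielding global existence for sufficiently small data. The principal technical obstacle is the verification of these uniform-in-$a$ bounds in the exponentially stable case; the pointwise-in-$a$ versions are an immediate consequence of the bounded $\mathcal{H}^\infty$-calculus with angle $0$, while the contraction argument itself is entirely routine once the bilinear estimate is at hand.
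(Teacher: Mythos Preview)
Your proposal is correct and mirrors the paper's argument in all essential respects: the same decomposition into the semigroup orbit of $u_0$ plus a zero-trace correction, the same bilinear estimate in $X_\beta^w$ with $\beta=\tfrac12+\tfrac{d}{4q}$, the same mixed-derivative embedding $\EE_{1,\mu_c}\hookrightarrow L_{2p,\sigma}(0,a;X_\beta^w)$, and the same observation that for $\alpha>0$ exponential stability of $e^{-A_{N,w}t}$ makes all constants uniform in $a$.

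The only noteworthy difference is in how the argument is closed. You construct the solution on $[0,a]$ from scratch via a Banach fixed point in $\EE_{1,\mu_c}(0,a)$. The paper instead takes the local solution supplied by Theorem~\ref{thm:weakNavier}, derives for the correction $v$ the a~priori inequality
\[
\|v\|_{{}_0\EE_{1,\mu_c}(a)}\le M\bigl(\|v\|_{{}_0\EE_{1,\mu_c}(a)}^2+\|u_0\|_{X_{\gamma,\mu_c}^w}^2\bigr),
\]
and then uses a continuity argument: since $a\mapsto\|v\|_{{}_0\EE_{1,\mu_c}(a)}$ is continuous and vanishes at $a=0$, it cannot escape the lower branch of the quadratic inequality provided $\|u_0\|<1/(2M)$, whence $t_+(u_0)=\infty$ (resp.\ $t_+(u_0)>a$). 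This continuation viewpoint is marginally more economical since it avoids redoing the contraction estimates, but your direct fixed-point version is equally valid and perhaps more self-contained.
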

\begin{proof}
Let $u$ be the solution of \eqref{eq:weakstokes1.2} according to Theorem \ref{thm:weakNavier}. Let $u_*(t):=e^{-A_{N,w}t}u_0$ und $v:=u-u_*$. It follows that
$$u_*\in H_{p,\mu_c}^1(0,a;H_{q,\sigma}^{-1}(\Omega))\cap L_{p,\mu_c}(0,a;H_{q,\sigma}^1(\Omega))$$
and
$$v\in {_0H}_{p,\mu_c}^1(0,a;H_{q,\sigma}^{-1}(\Omega))\cap L_{p,\mu_c}(0,a;H_{q,\sigma}^1(\Omega)),$$
where $v$ solves the problem
$$\partial_t v+A_{N,w} v=F_w(v+u_*),\ t>0,\quad v(0)=0.$$
By H\"{o}lders inequality and \cite[Proposition 3.4.3]{PS16}, we obtain the estimate
\begin{align*}
\|F_w(v+u_*)\|_{L_{p,\mu}(0,a;X_{-1/2})}&\le C\|v+u_*\|_{L_ {2p,\sigma}(0,a;X_\beta^w)}^2\\
&\le C(\|v\|_{_0\mathbb{E}_{1,\mu_c}(a)}^2+\|u_*\|_{L_ {2p,\sigma}(0,a;X_\beta^w)}^2)\\
&\le C(\|v\|_{_0\mathbb{E}_{1,\mu_c}(a)}^2+\|u_0\|_{X_\gamma^w}^2),
\end{align*}
with $\sigma=(1+\mu_c)/2$ (see \cite[Proof of Theorem 1.2]{PW16}). The constant $C>0$ does not depend on $a>0$ provided the friction coefficient satsfies $\alpha>0$, since in this case the semigroup generated by $-A_{N,w}$ is exponentially stable. By maximal $L_{p,\mu}$-regularity, this yields the estimate
$$\|v\|_{_0\mathbb{E}_{1,\mu_c}(a)}\le M(\|v\|_{_0\mathbb{E}_{1,\mu_c}(a)}^2+\|u_0\|_{X_{\gamma,\mu_c}^w}^2),$$
for each $a\in (0,t_+(u_0))$, with a constant $M>0$ being independent of $a>0$, provided $\alpha>0$. It is now easy to see, that if $\|u_0\|_{X_{\gamma,\mu_c}^w}<r:=1/2M$, then $\|v\|_{_0\mathbb{E}_{1,\mu_c}(a)}$ is uniformly bounded for $a\in (0,t_+(u_0))$ which yields the global existence of $v$, hence of $u$. 
\end{proof}

\subsection{Regularity of weak solutions}

In case $p>2$ and $q\ge d$, we can show that each weak solution becomes a strong solution as soon as $t>0$. By Theorem \ref{thm:weakNavier} it holds that $u(t)\in B_{qp,\sigma}^{1-2/p}(\Omega)$ for $t\in (0,t_+(u_0))$ and in case $p>2$ we have the embedding
$$B_{qp,\sigma}^{1-2/p}(\Omega)\hookrightarrow B_{qp,\sigma}^{2\mu-2/p}(\Omega)$$
at our disposal, provided $\mu\in (1/p,1/2)$. 

In the strong setting, the nonlinearity $F(u)=-\PP(u\cdot\nabla)u$ satisfies the estimate
$$\|F(u)\|_{L_q(\Omega)}\le \|u\|_{L_\infty(\Omega)}\|u\|_{H_q^1(\Omega)}$$
for all $u\in X_\beta=D(A_N^\beta)\subset H_q^{2\beta}(\Omega)^d$ and any $\beta>1/2$ as the Helmholtz projection $\PP$ is bounded in $L_q(\Omega)^d$. Since $2\beta-1\le\mu-1/p$ and
$$B_{qp,\sigma}^{2\mu-2/p}(\Omega)=(X_0,X_1)_{\mu-1/p,p}$$
is the trace space in $X_0=L_{q,\sigma}(\Omega)$, we may extend the weak solution to a strong solution as soon as $t>0$ by \cite{LPW14}, since the strong Stokes operator $A_N$ has the property of $L_p$-maximal regullarity in $X_0$ (see e.g.\ \cite{BKP13}). This yields the following result.
\begin{Cor}\label{cor:strongNavier}
Let $p\in (2,\infty)$ and $q\in [d,\infty)$ such that $\frac{1}{p}+\frac{d}{2q}\le 1$. For any $u_0\in B_{qp,\sigma}^{d/q-1}(\Omega)$ there exists a unique solution 
$$u\in H_{p,loc}^1(0,t_+;L_{q,\sigma}(\Omega))\cap L_{p,\loc}(0,t_+;H_{q,\sigma}^2(\Omega))$$
of \eqref{eq:NSpartslip}.
\end{Cor}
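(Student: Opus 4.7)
The plan is to bootstrap the weak solution from Theorem \ref{thm:weakNavier} into a strong solution by restarting the Cauchy problem in the strong setting $(X_0,X_1)=(L_{q,\sigma}(\Omega),D(A_N))$ at each positive time, and then using weak uniqueness to glue. Three ingredients enter: the instantaneous regularization of the weak solution, a bilinear estimate $F\colon X_\beta\times X_\beta\to X_0$ with $\beta$ just above $1/2$, and maximal $L_{p,\mu}$-regularity of the strong Stokes operator $A_N$ (from \cite{BKP13,LPW14}).

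Theorem \ref{thm:weakNavier} yields $u\in C((0,t_+);B_{qp,\sigma}^{1-2/p}(\Omega))$. Since $p>2$, any $\mu\in(1/p,1/2)$ satisfies $2\mu-2/p<1-2/p$, so Besov embedding gives
$$B_{qp,\sigma}^{1-2/p}(\Omega)\hookrightarrow B_{qp,\sigma}^{2\mu-2/p}(\Omega)=(L_{q,\sigma}(\Omega),D(A_N))_{\mu-1/p,p},$$
where the last identification is the strong-scale analogue of Proposition \ref{pro:2.4}, available because $A_N\in\cH^\infty(L_{q,\sigma}(\Omega))$ with angle zero (as stated in the introduction). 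Thus $u(t_0)$ lies in the $L_{p,\mu}$-trace class for the strong setting for every $t_0\in(0,t_+)$.

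For $F(u)=-\PP(u\cdot\nabla u)$, boundedness of $\PP$ on $L_q(\Omega)^d$ gives $\|F(u)\|_{X_0}\le\|u\|_{L_\infty(\Omega)}\|\nabla u\|_{L_q(\Omega)}$. Since $q\ge d$, any $\beta>1/2$ satisfies $2\beta>1\ge d/q$, so Sobolev embedding supplies $H_q^{2\beta}(\Omega)\hookrightarrow L_\infty(\Omega)\cap H_q^1(\Omega)$; together with $X_\beta=D(A_N^\beta)\subset H_q^{2\beta}(\Omega)^d\cap L_{q,\sigma}(\Omega)$, this verifies the bilinear hypothesis of Theorem \ref{thm:IntroThm}. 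Fix $\beta>1/2$ small enough that $2\beta-1\le\mu-1/p$ (possible since $\mu>1/p$), and apply Theorem \ref{thm:IntroThm} in the strong setting with datum $u(t_0)$: this produces
$$\tilde u\in H_{p,\mu}^1(t_0,t_0+a;L_{q,\sigma}(\Omega))\cap L_{p,\mu}(t_0,t_0+a;H_{q,\sigma}^2(\Omega))$$
on a short interval $[t_0,t_0+a]$. Both $\tilde u$ and $u$ are weak solutions of \eqref{eq:weakstokes1.2} starting at $u(t_0)$, so the uniqueness clause of Theorem \ref{thm:weakNavier} forces $\tilde u\equiv u$ on that interval. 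Since maximal $L_p$-regularity is independent of $\mu$, a further time shift removes the weight on any subinterval bounded away from $t_0$; covering $(0,t_+)$ by such restart intervals delivers $u\in H_{p,\loc}^1(0,t_+;L_{q,\sigma}(\Omega))\cap L_{p,\loc}(0,t_+;H_{q,\sigma}^2(\Omega))$.

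The main obstacle is the simultaneous satisfaction of the three parameter constraints $\beta>1/2$ (for the bilinear estimate via $H_q^{2\beta}\hookrightarrow L_\infty$), $2\beta-1\le\mu-1/p$ (subcriticality in Theorem \ref{thm:IntroThm}), and $\mu<1/2$ (the gain-of-regularity embedding in the first step). The assumption $q\ge d$ is precisely what permits $\beta$ to be chosen just above $1/2$, keeping all three constraints compatible as soon as $p>2$; with $q<d$ one would be forced to take $\beta>d/(2q)>1/2$, and the resulting lower bound $\mu>2\beta-1+1/p$ need not fit under $1/2$.
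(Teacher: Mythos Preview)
Your argument is correct and follows essentially the same route as the paper: regularize the weak solution via Theorem~\ref{thm:weakNavier} to land in $B_{qp,\sigma}^{2\mu-2/p}(\Omega)=(X_0,X_1)_{\mu-1/p,p}$ for some $\mu\in(1/p,1/2)$, verify the bilinear estimate $\|F(u)\|_{X_0}\le\|u\|_{L_\infty}\|u\|_{H_q^1}$ on $X_\beta$ for $\beta$ just above $1/2$ (using $q\ge d$), and restart in the strong setting. The only minor difference is that the paper invokes \cite{LPW14} together with maximal $L_p$-regularity of $A_N$ from \cite{BKP13}, whereas you appeal directly to Theorem~\ref{thm:IntroThm} and hence implicitly to the $\cH^\infty$-calculus of $A_N$ established later in Section~\ref{sec:Hinftystrong}; you are also more explicit about the weak-uniqueness gluing, which the paper leaves to \cite{LPW14}.
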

In the limiting case $d=p=q=2$, it is also possible to show that every weak solution extends to a strong solution as soon as $t>0$. Indeed, the corresponding critical trace space is
$$L_{2,\sigma}(\Omega)=[H_{2,\sigma}^{-1}(\Omega),H_{2,\sigma}^{1}(\Omega)]_{1/2}=(H_{2,\sigma}^{-1}(\Omega),H_{2,\sigma}^{1}(\Omega))_{1/2,2}=B_{22,\sigma}^0(\Omega),$$
see Proposition \ref{pro:2.4}. We employ the embedding
$$B_{22,\sigma}^0(\Omega)=(H_{2,\sigma}^{-1}(\Omega),H_{2,\sigma}^{1}(\Omega))_{1/2,2}\hookrightarrow (H_{2,\sigma}^{-1}(\Omega),H_{2,\sigma}^{1}(\Omega))_{1/2,p}=B_{2p,\sigma}^0(\Omega)$$ 
for some $p>2$ and solve \eqref{eq:weakstokes1.2} with $u_0\in B_{2p,\sigma}^0(\Omega)$ by Theorem \ref{thm:IntroThm}, to obtain a unique solution 
$$u\in H_{p,\mu_c}^1(0,a;H_{2,\sigma}^{-1}(\Omega))\cap L_{p,\mu_c}(0,a;H_{2,\sigma}^1(\Omega)),$$
with $\mu_c=1/p+1/2$.
The solution exists on a maximal interval of existence $[0,t_+(u_0))$ and depends continuously on the initial data. By regularization, it holds that
$$u(t)\in B_{2p,\sigma}^{1-2/p}(\Omega)$$
for all $t\in (0,t_+(u_0))$. We may now follow the lines of the proof of Corollary \ref{cor:strongNavier} to obtain a unique strong solution 
$$u\in H_{2,loc}^1(0,t_+;L_{2,\sigma}(\Omega))\cap L_{2,\loc}(0,t_+;H_{2,\sigma}^2(\Omega))$$
of \eqref{eq:NSpartslip} for any initial value $u_0\in L_{2,\sigma}(\Omega)$.

If the friction coefficient satisfies $\alpha>0$, then the energy
equation 
$$\|u(t)\|_{L_2(\Omega)}^2+\|D(u)\|_{L_2(0,t;L_2(\Omega))}^2+\alpha\|u\|_{L_2(0,t;L_2(\partial\Omega))}^2=\|u_0\|_{L_2(\Omega)}^2,\ t\in (0,t_+)$$
combined with Korns inequality
$$\|v\|_{H_2^1(\Omega)}\le C(\|D(v)\|_{L_2(\Omega)}+\|v\|_{L_2(\partial\Omega)}),\quad\forall\ v\in H_2^1(\Omega)^2,$$
yields that 
$$u\in L_\infty(0,t_+;L_2(\Omega)^2)\cap L_2(0,t_+;H_2^1(\Omega)^2).$$
Since $u$ solves \eqref{eq:weakstokes1.2}, it follows that
$$u\in H_2^1(0,t_+;H_{2,\sigma}^{-1}(\Omega))\cap L_2(0,t_+;H_{2,\sigma}^1(\Omega)),$$
which in turn implies that the weak solution exists globally in time. We already know that the global weak solution extends to a strong solution, hence we obtain the following result.
\begin{Cor}\label{cor:strongNavierglob}
Let $\Omega\subset\R^2$ be bounded with boundary $\partial\Omega\in C^3$. For any $u_0\in L_{2,\sigma}(\Omega)$, there exists a unique global solution 
$$u\in H_{2,loc}^1(\R_+;L_{2,\sigma}(\Omega))\cap L_{2,\loc}(\R_+;H_{2,\sigma}^2(\Omega))$$
of \eqref{eq:NSpartslip}.
\end{Cor}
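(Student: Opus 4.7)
The plan is to follow the strategy already sketched in the paragraph preceding the corollary and supply the missing details, splitting the argument into local existence with regularization and a global a priori bound that rules out blow-up.

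First I would apply Theorem \ref{thm:weakNavier} in the case $d=2$, $q=2$ with an auxiliary $p>2$ satisfying $\tfrac{1}{p}+\tfrac{1}{2}\le 1$. Using $L_{2,\sigma}(\Omega)=B_{22,\sigma}^0(\Omega)\hookrightarrow B_{2p,\sigma}^0(\Omega)$, any $u_0\in L_{2,\sigma}(\Omega)$ admits a unique local weak solution
$$u\in H_{p,\mu_c}^1(0,a;H_{2,\sigma}^{-1}(\Omega))\cap L_{p,\mu_c}(0,a;H_{2,\sigma}^1(\Omega)),\qquad \mu_c=\tfrac{1}{p}+\tfrac{1}{2},$$
on a maximal interval $[0,t_+(u_0))$, with $u(t)\in B_{2p,\sigma}^{1-2/p}(\Omega)$ for every $t\in(0,t_+)$ by instantaneous regularization. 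Choosing $\mu\in(1/p,1/2)$ so that $B_{2p,\sigma}^{1-2/p}\hookrightarrow B_{2p,\sigma}^{2\mu-2/p}$ equals the trace space of the strong Stokes operator $A_N$ in $L_{2,\sigma}(\Omega)$ at weight $\mu$, the argument of Corollary \ref{cor:strongNavier}, based on maximal $L_p$-regularity of $A_N$ and \cite{LPW14}, lifts $u$ to a strong solution on $(0,t_+)$.

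Next I would exclude blow-up by an energy estimate. Testing the weak formulation \eqref{eq:weakstokes1.2} with $u$, the divergence-free condition together with $u\cdot\nu=0$ makes the convective pairing $\langle F_w(u),u\rangle=(u\otimes u|\nabla u)_{L_2(\Omega)}$ vanish, and the representation \eqref{eq:weakStokes2} yields
$$\tfrac{1}{2}\tfrac{d}{dt}\|u\|_{L_2(\Omega)}^2+\|D(u)\|_{L_2(\Omega)}^2+\alpha\|u\|_{L_2(\Sigma)}^2=0.$$
After integration in time, combined with the Korn inequality quoted above, this gives $u\in L_\infty(0,t_+;L_{2,\sigma}(\Omega))\cap L_2(0,t_+;H_{2,\sigma}^1(\Omega))$. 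The two-dimensional Ladyzhenskaya inequality $\|u\|_{L_4(\Omega)}^2\le C\|u\|_{L_2(\Omega)}\|u\|_{H_2^1(\Omega)}$ then bounds
$$\|F_w(u)\|_{H_{2,\sigma}^{-1}(\Omega)}\le C\|u\|_{L_4(\Omega)}^2\le C\|u\|_{L_2(\Omega)}\|u\|_{H_2^1(\Omega)},$$
so $F_w(u)\in L_2(0,t_+;H_{2,\sigma}^{-1}(\Omega))$. Since $A_{N,w}$ has a bounded $\mathcal H^\infty$-calculus on the UMD space $H_{2,\sigma}^{-1}(\Omega)$ with angle $0$ (Subsection \ref{subsec:weakHinftyNavier}), in particular maximal $L_2$-regularity, the weak equation forces $u\in H_2^1(0,t_+;H_{2,\sigma}^{-1}(\Omega))\cap L_2(0,t_+;H_{2,\sigma}^1(\Omega))$ on every compact subinterval, which excludes blow-up. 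Hence $t_+=\infty$, and applying the strong-regularization step of the previous paragraph on each finite interval $(0,T)$ delivers the claimed global regularity $u\in H_{2,\loc}^1(\R_+;L_{2,\sigma}(\Omega))\cap L_{2,\loc}(\R_+;H_{2,\sigma}^2(\Omega))$.

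I expect the main obstacle to be the rigorous justification of the energy identity: a priori the weak solution only lies in a time-weighted space $H_{p,\mu_c}^1(H^{-1})\cap L_{p,\mu_c}(H^1)$, so testing with $u$ requires a standard density or mollification argument combined with the improved regularity from the second paragraph on $(0,t_+)$. The use of Korn's inequality to absorb the dissipation $\|D(u)\|_{L_2(\Omega)}^2+\alpha\|u\|_{L_2(\Sigma)}^2$ into control of the full $H_2^1$-norm relies on $\alpha>0$, consistent with the assumption implicit in the paragraph preceding the corollary; everything else follows by combining the already established $\mathcal H^\infty$-calculus, maximal regularity of $A_{N,w}$ and $A_N$, and two-dimensional Sobolev embeddings.
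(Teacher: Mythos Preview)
Your proposal is correct and follows essentially the same route as the paper: local weak solvability via the embedding $L_{2,\sigma}=B^0_{22,\sigma}\hookrightarrow B^0_{2p,\sigma}$ and Theorem~\ref{thm:IntroThm}, instantaneous regularization to a strong solution as in Corollary~\ref{cor:strongNavier}, and then globality from the energy identity plus Korn's inequality. The only cosmetic difference is that you invoke the Ladyzhenskaya estimate and maximal $L_2$-regularity to recover $u\in H_2^1(H^{-1}_{2,\sigma})\cap L_2(H^1_{2,\sigma})$, whereas the paper reads this off directly from the equation $\partial_t u=-A_{N,w}u+F_w(u)$ once $u\in L_2(H^1_{2,\sigma})$ is known; both arguments are equivalent here.
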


\subsection{Long-time behaviour}\label{sec:QB}

In this section, we assume that the parameters $\beta$ and $p$ satisfiy the relation
\begin{equation}\label{eq:relbeta}
1-\beta>\frac{1}{p}
\end{equation}
which means ${2}/{p}+{d}/{2q}<1$.
Note that this can always be achieved by choosing the microscopic parameter $p$ sufficiently large, since $q>d/2$ and $\beta=\frac{1}{2}+\frac{d}{4q}$. Taking \eqref{eq:relbeta} for granted, we may use the embedding
$$X_{\gamma.1}^w=(X_0^w,X_1^w)_{1-1/p,p}\hookrightarrow [X_0^w,X_1^w]_\beta=D(A_{N,w}^\beta)= X_\beta^w,$$
to obtain $F_w\in C^1(X_{\gamma,1}^w,X_0^w)$.

By Theorem \ref{thm:weakNavier} the solution depends continuously on the initial data, hence there are $a>0$, $c>0$ and $\varepsilon>0$ such that
$$\|u(\cdot,u_0)\|_{\mathbb{E}_{1,\mu_c}^w(0,a)}\le c\|u_0\|_{X_{\gamma,\mu_c}^w}$$
for all $u_0\in \bar{B}_{X_{\gamma,\mu_c}^w}(0,\varepsilon)$, where $\mu_c=1/p+d/(2q)$ is the critical wheight. This in turn implies that for any $\delta\in (0,a)$ it holds that
\begin{equation}\label{eq:MRest}
\|u(t,u_0)\|_{X_{\gamma,1}^w}\le \delta^{\mu_c-1}C\|u_0\|_{X_{\gamma,\mu_c}^w}
\end{equation}
for all $t\in [\delta,a]$ and some constant $C=C(a)>0$ which does not depend on $t$ and $\delta$. If the friction coefficient $\alpha>0$, then $\sigma(A_{N,w})\subset(0,\infty)$, hence the equilibrium $u_*=0$ of \eqref{eq:weakstokes1.2} is exponentially stable in $X_{\gamma,1}^w$, by the principle of linearized stability (see e.g. \cite{KPW10,PSZ09}). Choosing $\|u_0\|_{X_{\gamma,\mu_c}^w}$ sufficiently small, then $u(t,u_0)$ is arbitrarily close to $u_*=0$ in $B_{qp}^{1-2/p}(\Omega)^d$.

Assume furthermore that $p>2$ and $q\ge d$. Then, by Corollary \ref{cor:strongNavier} the solution $u(t,u_0)$ of \eqref{eq:weakstokes1.2} subject to the initial value $u_0\in B_{qp,\sigma}^{d/q-1}(\Omega)$ extends to a strong solution of \eqref{eq:strongCP} as soon as $t>0$. It follows from the embedding
$$B_{qp,\sigma}^{1-2/p}(\Omega)\hookrightarrow B_{qp,\sigma}^{2\mu-2/p}(\Omega)$$
for $\mu\in (1/p,1/2)$ and \eqref{eq:MRest}, that for each $\tilde{\varepsilon}>0$ there exists $\tilde{r}>0$ such that for all $s\in [\delta,a]$ we have $\|u(s,u_0)\|_{X_{\gamma,\mu}}\le\tilde{\varepsilon}$ provided $\|u_0\|_{X_{\gamma,\mu}^w}\le \tilde{r}$. Since the strong solution $u(t,u(s,u_0))$ of \eqref{eq:strongCP}, subject to the initial value $u(s,u_0)\in X_{\gamma,\mu}$, $s\in [\delta,a]$, depends continuously on the initial data, there are $\tilde{a}>0$ and $\tilde{c}>0$ such that
$$\|u(\cdot,u(s,u_0))\|_{\mathbb{E}_{1,\mu}(0,\tilde{a})}\le \tilde{c}\|u(s,u_0)\|_{X_{\gamma,\mu}}$$
for all $s\in [\delta,a]$ and some $\mu\in (1/p,1/2)$. It follows that 
$$\|u(t,u(s,u_0))\|_{X_{\gamma,1}}\le \tilde{\delta}^{\mu-1}C\|u(s,u_0)\|_{X_{\gamma,\mu}}$$
for all $t\in[\tilde{\delta},\tilde{a}]$. This in turn implies that $u(t,u(s,u_0))$ is arbitrarily close to zero in $X_{\gamma,1}$ by choosing $\|u_0\|_{X_{\gamma,\mu}^w}$ sufficiently small. 

Finally, note that the nonlinearity $F(u)=-\PP(u\cdot \nabla u)$ in \eqref{eq:strongCP} satisfies $F\in C^1(X_\beta,X_0)$ for each $\beta\in (1/2,1)$, where $X_0=L_{q,\sigma}(\Omega)$ and $X_\beta=[X_0,X_1]_{\beta}\subset H_q^{2\beta}(\Omega)^d$, with $X_1=D(A_N)$ as in Subsection \ref{subsec:StokesNBC}. Since by assumption $p>2$, we may choose $\beta$ sufficiently close to $1/2$ to achieve $1-\beta>1/p$. In this case, the embedding
$$X_{\gamma,1}=(X_0,X_1)_{1-1/p,p}\hookrightarrow [X_0,X_1]_\beta=X_\beta,$$
readily implies $F\in C^1(X_{\gamma,1},X_0)$. Since the equilibrium $u_*=0$ of \eqref{eq:strongCP} is exponentially stable in $X_{\gamma,1}$ provided the friction coefficient $\alpha>0$, we obtain the following result.
\begin{The}\label{thm:weakNavierqual}
Assume that the friction coefficient $\alpha>0$, $p\in (1,\infty)$ and $q\in (d/2,\infty)$. Then the following assertions hold.
\begin{enumerate}
\item If $\frac{2}{p}+\frac{d}{2q}< 1$, there exists $r>0$ such that the solution $u(t,u_0)$ of \eqref{eq:weakstokes1.2} exists globally and converges to zero in the norm of $B_{qp}^{1-2/p}(\Omega)^d$ as $t\to\infty$, provided $\|u_0\|_{B_{qp}^{d/q-1}}\le r$.
\item If $p>2$ and $q\ge d$, there exists $r>0$ such that the solution $u(t,u_0)$ of \eqref{eq:weakstokes1.2} exists globally and converges to zero in the norm of $B_{qp}^{2-2/p}(\Omega)^d$ as $t\to\infty$, provided $\|u_0\|_{B_{qp}^{d/q-1}}\le r$.
\end{enumerate}
\end{The}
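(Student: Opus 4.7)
The plan is to combine three ingredients already developed: the instantaneous regularization of weak solutions given by Theorem \ref{thm:weakNavier} together with the maximal regularity bound \eqref{eq:MRest}; the weak-to-strong regularization of Corollary \ref{cor:strongNavier}; and the principle of linearized stability of K\"ohne, Pr\"uss \& Wilke \cite{KPW10,PSZ09}, which requires the nonlinearity to be $C^1$ from the natural trace space into the base space and the linearization at the equilibrium to generate an exponentially stable semigroup.

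For part (i), the hypothesis $\frac{2}{p}+\frac{d}{2q}<1$ is exactly \eqref{eq:relbeta} with $\beta=\tfrac12+\tfrac{d}{4q}$, so the discussion preceding the theorem already supplies $F_w\in C^1(X_{\gamma,1}^w,X_0^w)$ with $F_w(0)=F_w'(0)=0$. Because $\alpha>0$, the spectral analysis of Subsection \ref{subsec:weakHinftyNavier} together with compactness of the resolvent yields $\sigma(A_{N,w})\subset(\delta_*,\infty)$ for some $\delta_*>0$, so $-A_{N,w}$ generates an exponentially stable analytic semigroup on $X_0^w=H_{q,\sigma}^{-1}(\Omega)$. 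The linearized stability principle then produces a radius $r_0>0$ such that every solution of \eqref{eq:weakstokes1.2} starting in $\bar B_{X_{\gamma,1}^w}(0,r_0)$ exists globally and decays exponentially in $X_{\gamma,1}^w=B_{qp,\sigma}^{1-2/p}$. For a critical datum $u_0\in B_{qp,\sigma}^{d/q-1}$ I fix any $\delta\in(0,a)$ and use \eqref{eq:MRest} to choose the smallness radius $r$ so that $\|u(\delta,u_0)\|_{X_{\gamma,1}^w}\leq r_0$; restarting the Cauchy problem at $t=\delta$ with this regularized value and invoking uniqueness then yields global existence together with the claimed convergence in $B_{qp}^{1-2/p}(\Omega)^d$.

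Part (ii) is obtained by running the analogous scheme in the strong scale. The same spectral argument, now read off in $X_0=L_{q,\sigma}(\Omega)$, produces $\sigma(A_N)\subset(\delta_*,\infty)$, and the paragraph preceding the theorem selects $\beta\in(\tfrac12,1)$ with $1-\beta>1/p$ and establishes $F\in C^1(X_{\gamma,1},X_0)$. Linearized stability therefore supplies a radius $r_1>0$ within which solutions of \eqref{eq:strongCP} decay exponentially in $X_{\gamma,1}=B_{qp,\sigma}^{2-2/p}$. To connect with critical weak data, I first apply part (i) to drive $u(t,u_0)$ into a small $X_{\gamma,1}^w$-ball after time $\delta$, then use Corollary \ref{cor:strongNavier} (available because $p>2$ and $q\geq d$) to reinterpret $u(\delta,u_0)$ as a strong initial value in $X_{\gamma,\mu}$ for some $\mu\in(1/p,1/2)$, and finally apply the strong-scale maximal regularity estimate at a further intermediate time $\tilde\delta\in(0,\tilde a)$ to land inside $\bar B_{X_{\gamma,1}}(0,r_1)$, where the strong linearized stability principle concludes.

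The main obstacle is purely bookkeeping: the intermediate times $\delta,\tilde\delta,a,\tilde a$ and the smallness thresholds in the various trace spaces $X_{\gamma,\mu_c}^w$, $X_{\gamma,1}^w$, $X_{\gamma,\mu}$ and $X_{\gamma,1}$ must be chosen in the correct order so that each regularization step lands inside the ball required for the next. All analytic ingredients --- the $\cH^\infty$-calculi of $A_{N,w}$ and $A_N$, the interpolation identifications from Section \ref{sec:PSBC}, the short-time well-posedness in critical trace spaces, and the linearized stability theorem --- are already at our disposal.
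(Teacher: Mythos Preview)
Your proposal is correct and follows essentially the same route as the paper: for part (i) you use \eqref{eq:MRest} to regularize critical data into $X_{\gamma,1}^w$ and then invoke linearized stability (available since $F_w\in C^1(X_{\gamma,1}^w,X_0^w)$ and $\sigma(A_{N,w})\subset(0,\infty)$ when $\alpha>0$); for part (ii) you pass through the strong scale via Corollary \ref{cor:strongNavier} and repeat the scheme with $F\in C^1(X_{\gamma,1},X_0)$. One small caveat: in part (ii) you write ``apply part (i)'' to land in a small $X_{\gamma,1}^w$-ball, but the hypothesis $\tfrac{2}{p}+\tfrac{d}{2q}<1$ of part (i) is not implied by $p>2$ and $q\ge d$ (take $p$ just above $2$ and $q=d$); what you actually need---and what the paper uses---is only the estimate \eqref{eq:MRest}, which requires merely $\tfrac{1}{p}+\tfrac{d}{2q}\le 1$ and is indeed satisfied here.
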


\section{The strong Stokes operator with Navier boundary conditions}\label{sec:Hinftystrong}

We have seen in Subsection \ref{subsec:weakHinftyNavier} that the weak Stokes operator $A_{N,w}$ subject to Navier boundary conditions admits a bounded $\mathcal{H}^\infty$-calculus in $H_{q,\sigma}^{-1}(\Omega)^d$ with $\mathcal{H}^\infty$-angle $\phi_{A_{N,w}}^\infty=0$, provided the friction coefficient $\alpha>0$. 

It is the purpose of this section to transfer this property to the corresponding strong Stokes operator $A_N$ in $L_{q,\sigma}(\Omega)$. To this end, we will apply again Amann's theory of interpolation-extrapolation scales. Let $A_0:=A_{N,w}$, $X_0:=H_{q,\sigma}^{-1}(\Omega)$ and $X_1=H_{q,\sigma}^{1}(\Omega)$. By \cite[Theorems V.1.5.1 \& V.1.5.4]{Ama95}, the pair $(X_0,A_0)$ generates an interpolation-extrapolation scale $(X_\alpha,A_\alpha)$, $\alpha\in\mathbb{R}$ with respect to the complex interpolation functor and $A_\alpha\in \mathcal{H}^\infty(X_\alpha)$ with angle $\phi_{A_\alpha}^\infty=\phi_{A_0}^\infty=0$ for any $\alpha\in \R$.

We will show in the sequel that the operator $A_{1/2}:X_{3/2}\to X_{1/2}$ coincides with the strong Stokes operator $A_N$ subject to Navier boundary conditions with domain
$$X_1^s=D(A_N)=\{u\in H_q^2(\Omega)^d\cap L_{q,\sigma}(\Omega):P_{\Sigma}\left((\nabla u+\nabla u^T)\nu\right)+\alpha u=0\ \text{on}\ \Sigma\}$$
in the base space $X_0^s=L_{q,\sigma}(\Omega)$. Observe that $0\in \rho(A_{1/2})\cap\rho(A_N)$, since $\rho(A_{1/2})=\rho(A_{N,w})$ and, by Proposition \ref{pro:complexInt},
$$X_{1/2}=[X_0,X_1]_{1/2}=[H_{q,\sigma}^{-1}(\Omega),H_{q,\sigma}^{1}(\Omega)]=L_{q,\sigma}(\Omega).$$
The operator $A_{1/2}$ is the restriction of $A_0=A_{N,w}$ to $L_{q,\sigma}(\Omega)$, hence $A_{1/2}u=A_0u=A_{N,w}u$ for any $u\in D(A_{1/2})$ and therefore
$$(A_{1/2}u,\phi)_{L_2(\Omega)}=\langle A_0u,\phi\rangle=\langle A_{N,w}u,\phi\rangle=(\nabla u|\nabla\phi)_{L_2(\Omega)}+(L_\Sigma u_{\|}+\alpha u_{\|}|\phi_{\|})_{L_2(\Sigma)},$$
for any $(u,\phi)\in D(A_{1/2})\times H_{q',\sigma}^1(\Omega)$.
On the other hand, it follows from integration by parts, that
$$(A_Nv,\phi)_{L_2(\Omega)}=(-\mathbb{P}\Delta v,\phi)_{L_2(\Omega)}=(\nabla v|\nabla\phi)_{L_2(\Omega)}+(L_\Sigma v_{\|}+\alpha v_{\|}|\phi_{\|})_{L_2(\Sigma)}=\langle A_{N,w}v,\phi\rangle,$$
for any $(v,\phi)\in D(A_N)\times H_{q',\sigma}^1(\Omega)$. 

For a given $u\in D(A_{1/2})$ there exists a unique $v\in D(A_N)$ such that 
$$A_N v=A_{1/2}u,$$ since $A_{1/2}u\in L_{q,\sigma}(\Omega)$. This in turn implies that
$$\langle A_{N,w}u,\phi\rangle=\langle A_{N,w}v,\phi\rangle$$
for any $\phi\in H_{q',\sigma}^1(\Omega)$, hence $v=u$ by injectivity of $A_{N,w}$. On the contrary, if $v\in D(A_N)$ is given, then there exists a unique $u\in D(A_{1/2})$ such that $A_{1/2}u=A_N v$, since $A_Nv\in L_{q,\sigma}(\Omega)$. By the same arguments as above, we obtain $u=v$, showing that $D(A_{1/2})=D(A_N)$ and $A_{1/2}=A_N$.
\begin{The}\label{thm:Hinfty-strong}
Let $\alpha>0$, $1<q<\infty$ and $\Omega\subset\R^d$ open, bounded with boundary $\Sigma\in C^3$. Then the Stokes operator $A_N=-\mathbb{P}\Delta$ subject to Navier boundary conditions with domain 
$$X_1^s=D(A_N)=\{u\in H_q^2(\Omega)^d\cap L_{q,\sigma}(\Omega):P_{\Sigma}\left((\nabla u+\nabla u^T)\nu\right)+\alpha u=0\ \text{on}\ \Sigma\}$$
admits a bounded $\cH^\infty$-calculus in $X_0^s=L_{q,\sigma}(\Omega)$ with $\cH^\infty$-angle $\phi_{A_N}^\infty=0$.
\end{The}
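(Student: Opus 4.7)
The plan is to construct an interpolation-extrapolation scale based at the weak Stokes operator $A_{N,w}$, which already has a bounded $\mathcal{H}^\infty$-calculus with angle $0$ in $H_{q,\sigma}^{-1}(\Omega)$ by the work of Subsection \ref{subsec:weakHinftyNavier}, and then identify the $+1/2$ step of the scale with the strong Stokes operator $A_N$. Concretely, I set $A_0 := A_{N,w}$, $X_0 := H_{q,\sigma}^{-1}(\Omega)$, $X_1 := H_{q,\sigma}^1(\Omega)$, and invoke \cite[Theorems V.1.5.1 \& V.1.5.4]{Ama95} to produce the scale $(X_\alpha, A_\alpha)_{\alpha\in\R}$ with respect to the complex interpolation functor. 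By \cite[Proposition 3.3.14]{PS16} (or its variant used in Subsection \ref{subsec:InExScale}), the $\mathcal{H}^\infty$-property with angle $0$ is preserved along the scale, so $A_{1/2} \in \mathcal{H}^\infty(X_{1/2})$ with $\phi_{A_{1/2}}^\infty = 0$.

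The heart of the argument is then to identify $X_{1/2}$ with $L_{q,\sigma}(\Omega)$ and $A_{1/2}$ with $A_N$. The first identification is immediate from Proposition \ref{pro:complexInt}: interpolating at exponent $\beta = 1/2$ between $H_{q,\sigma}^{-1}(\Omega)$ and $H_{q,\sigma}^1(\Omega)$ gives exactly $L_{q,\sigma}(\Omega)$. For the second identification, by general extrapolation theory $A_{1/2}$ is the $X_{1/2}$-realization of $A_0$, so for any $u \in D(A_{1/2})$ and any test function $\phi \in H_{q',\sigma}^1(\Omega) = X_{1/2}^\sharp$, I have the pairing
\begin{equation*}
(A_{1/2}u,\phi)_{L_2(\Omega)} = \langle A_{N,w} u, \phi\rangle = (\nabla u | \nabla \phi)_{L_2(\Omega)} + (L_\Sigma u_\| + \alpha u_\| | \phi_\|)_{L_2(\Sigma)},
\end{equation*}
using the defining formula \eqref{eq:weakStokes1.1} of $A_{N,w}$. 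On the other hand, for $v \in D(A_N)$ integration by parts (combined with the Navier boundary condition, as already exhibited at the beginning of Subsection \ref{subsec:weakHinftyNavier}) yields exactly the same right-hand side with $v$ in place of $u$.

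Now I exploit injectivity of $A_{N,w}$, which follows from $\sigma(A_{N,w}) \subset (0,\infty)$ (using $\alpha > 0$; see Subsection \ref{subsec:weakHinftyNavier}), and in particular $0 \in \rho(A_{1/2}) \cap \rho(A_N)$. Given $u \in D(A_{1/2})$, the element $A_{1/2} u$ lies in $L_{q,\sigma}(\Omega)$, so there is a unique $v \in D(A_N)$ with $A_N v = A_{1/2} u$; the identity of weak forms forces $\langle A_{N,w}(u - v), \phi\rangle = 0$ for every $\phi \in H_{q',\sigma}^1(\Omega)$, whence $u = v$ by injectivity. The reverse inclusion is obtained symmetrically: any $v \in D(A_N)$ produces $u \in D(A_{1/2})$ with $A_{1/2} u = A_N v$, and the same weak-form argument yields $u = v$. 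Thus $D(A_{1/2}) = D(A_N)$ and $A_{1/2} = A_N$, so $A_N$ inherits the bounded $\mathcal{H}^\infty$-calculus with angle $0$.

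The main technical obstacle is really the identification of $X_{1/2}$ with $L_{q,\sigma}(\Omega)$, since this rests on the complex interpolation identity of Proposition \ref{pro:complexInt} for the \emph{perfect-slip} scale; here I must be careful that the two scales coincide at the level $\alpha = 1/2$, which is ensured because the perturbation $B$ appearing in Subsection \ref{subsec:weakHinftyNavier} is lower order and the $\mathcal{H}^\infty$-property with angle $0$ of $A_{N,w}$ yields complex interpolation spaces that match those of the perfect-slip operator on this range. Everything else is a bookkeeping exercise in extrapolation theory together with the injectivity consequence of $0 \in \rho(A_{N,w})$.
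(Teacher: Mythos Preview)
Your proof is correct and follows essentially the same route as the paper: build the interpolation-extrapolation scale from $(H_{q,\sigma}^{-1}(\Omega),A_{N,w})$, identify $X_{1/2}=L_{q,\sigma}(\Omega)$ via Proposition~\ref{pro:complexInt}, and then match $A_{1/2}$ with $A_N$ by comparing weak forms and using $0\in\rho(A_{N,w})\cap\rho(A_N)$. Your closing worry is unnecessary, though: Proposition~\ref{pro:complexInt} is a statement about the complex interpolation of the \emph{spaces} $H_{q,\sigma}^{-1}(\Omega)$ and $H_{q,\sigma}^1(\Omega)$, and since the weak Navier scale uses exactly these same endpoint spaces (only the operator changes via the lower-order perturbation $B$), the identity $[H_{q,\sigma}^{-1},H_{q,\sigma}^{1}]_{1/2}=L_{q,\sigma}$ applies directly without any further justification.
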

With the help of Theorem \ref{thm:Hinfty-strong} we may study critical spaces for \eqref{eq:NSpartslip} in the strong setting. To be precise, let $X_0^s=L_{q,\sigma}(\Omega)$, $X_1^s=D(A_N)$ as in Theorem \ref{thm:Hinfty-strong} and consider the semilinear evolution equation
\begin{equation}\label{eq:strongNSnavierBC}
\partial_t u+A_N u=F(u),
\end{equation}
subject to the initial condition $u(0)=u_0$, where
$$F(u)=-\PP(u\cdot\nabla u)$$
for $u\in X_\beta^s=[X_0^s,X_1^s]_\beta$. 

Let $A=-\Delta$ subject to Navier boundary conditions with domain
$$D(A)=\{u\in H_q^2(\Omega)^d:u\cdot\nu=0,P_{\Sigma}\left((\nabla u+\nabla u^T)\nu\right)+\alpha u=0\ \text{on}\ \Sigma\}.$$
Observe that in case of Navier boundary conditions we \emph{do not have} the identity
$$\PP D(A)=D(A)\cap R(\PP),$$
since the Helmholtz projection $\PP$ does only respect the boundary condition $u\cdot \nu=0$. However, we may define a linear mapping $Q$ on $D(A)$ by
$$Q=A_N^{-1}\PP A.$$
Then $Q:D(A)\to D(A_N)$ is a bounded projection, since $Qu\in D(A_N)$ and therefore
$$Q^2u=Q(Qu)=A_N^{-1}\PP A(Qu)=A_N^{-1}A_N(Qu)=Qu,$$
for all $u\in D(A)$. Furthermore, $Q|_{D(A_N)}=I_{D(A_N)}$ and therefore $Q:D(A)\to D(A_N)$ is surjective. By a duality argument, there exists some constant $C>0$ such that
\begin{equation}\label{eq:Q}
\|Qu\|_{L_q(\Omega)^d}\le C\|u\|_{L_q(\Omega)^d}
\end{equation}
for all $u\in D(A)$. Infact,
$$(Qu|\phi)_{L_2}=(A_N^{-1}\PP A u|\phi)_{L_2}=(\PP A u|A_N^{-1}\phi)_{L_2}=( A u|A_N^{-1}\phi)_{L_2}=(u|A A_N^{-1}\phi)_{L_2}$$
implies
$$|(Qu|\phi)_{L_2}|\le C\|u\|_{L_q(\Omega)^d}\|\phi\|_{L_{q'}(\Omega)^d}$$
for all $u\in D(A)$ and $\phi\in L_{q'}(\Omega)^d$, with $C:=\|A A_N^{-1}\|_{\mathcal{B}(L_{q'}(\Omega)^d;L_{q'}(\Omega)^d)}>0$.

Since $D(A)$ is dense in $L_q(\Omega)^d$, there exists a unique extension $\tilde{Q}\in \mathcal{B}(L_{q}(\Omega)^d;L_{q,\sigma}(\Omega))$ of $Q$. Clearly, $\tilde{Q}$ is a projection and as $D(A_N)$ is dense in $L_{q,\sigma}(\Omega)$, $\tilde{Q}|_{L_{q,\sigma}(\Omega)}=I_{L_{q,\sigma}(\Omega)}$.
It follows that
$$L_q(\Omega)^d=L_{q,\sigma}(\Omega)\oplus N(\tilde{Q})\quad\text{and}\quad D(A)=[D(A)\cap R(\tilde{Q})]\oplus [D(A)\cap N(\tilde{Q})]$$
since $\tilde{Q}D(A)=D(A)\cap R(\tilde{Q})=D(A_N)$.
Moreover, with help of this projection we may now compute
$$[L_{q,\sigma}(\Omega),D(A_N)]_\theta=[\tilde{Q} L_q(\Omega)^d,\tilde{Q}D(A)]_\theta=\tilde{Q}[L_q(\Omega)^d,D(A)]_\theta=D(A^\theta)\cap R(\tilde{Q})$$
as well as
$$(L_{q,\sigma}(\Omega),D(A_N))_{\theta,p}=(\tilde{Q} L_q(\Omega)^d,\tilde{Q}D(A))_{\theta,p}=\tilde{Q}(L_q(\Omega)^d,D(A))_{\theta,p}
=(L_q(\Omega)^d,D(A))_{\theta,p}\cap R(\tilde{Q})$$
for all $\theta\in [0,1]$ and $p\in (1,\infty)$, see \cite[Theorem 1.17.1.1]{Tri78}.

For $X_0^s=L_{q,\sigma}(\Omega)$ and $X_1^s=D(A_N)$ as in Theorem \ref{thm:Hinfty-strong}, we have $X_\beta^s=[X_0^s,X_1^s]_\beta={_\|}H_{q,\sigma}^{2\beta}(\Omega)$ and $(X_0^s,X_1^s)_{\mu-1/p,p}={_\|}B_{qp,\sigma}^{2\mu-2/p}(\Omega)$, where
$${_\|}H_{q,\sigma}^{r}(\Omega):=
L_{q,\sigma}(\Omega)\cap
\begin{cases}
H_q^{r}(\Omega)^d&,\ r\in [0,1+1/q),\\
[L_q(\Omega)^d,D(A)]_{1+1/q}&,\ r=1+1/q,\\
\{u\in H_q^{r}(\Omega)^d: P_\Sigma(D(u)\nu)+\alpha u=0\}&,\ r>1+1/q.
\end{cases}
$$
and 
$${_\|}B_{qp,\sigma}^{r}(\Omega):=
L_{q,\sigma}(\Omega)\cap
\begin{cases}
B_{qp}^{r}(\Omega)^d&,\ r\in [0,1+1/q),\\
(L_q(\Omega)^d,D(A))_{1+1/q,p}&,\ r=1+1/q,\\
\{u\in B_{qp}^{r}(\Omega)^d: P_\Sigma(D(u)\nu)+\alpha u=0\}&,\ r>1+1/q.
\end{cases}
$$
As $\PP$ is bounded in $L_q(\Omega)^d$, by H\"older's inequality we obtain
$$ \|F(u)\|_{X_0^s} \leq C \|u\cdot\nabla u\|_{L_q}\leq C \|u\|_{L_{qr^\prime}}\|u\|_{H^1_{qr}},$$
for all $u\in X_\beta^s$, where $r,r^\prime>1$ and $1/r+1/r^\prime=1$. We choose $r$ in such a way that the Sobolev indices of the spaces $L_{qr^\prime}(\Omega)$ and $H^1_{qr}(\Omega)$ are equal, which means
$$ 1-\frac{d}{qr} = -\frac{d}{qr^\prime}\quad \mbox{or equivalently}\quad \frac{d}{qr} = \frac{1}{2}\left(1+\frac{d}{q}\right).$$
This is feasible if $q\in (1,d)$, we assume this in the sequel. Next we employ Sobolev embeddings to obtain
$$ X_\beta^s\subset H^{2\beta}_q(\Omega)^d\hookrightarrow L_{qr^\prime}(\Omega)^d\cap H^1_{qr}(\Omega)^d.$$
This requires for the Sobolev index $2\beta-d/q$ of $H^{2\beta}_q(\Omega)$
$$ 1-\frac{d}{qr} = 2\beta-\frac{d}{q},\quad \mbox{i.e.} \quad \beta = \frac{1}{4}\left( 1+ \frac{d}{q}\right).$$
The condition $\beta<1$ is equivalent to $d/q<3$, we assume this below. Observe that the critical weight $\mu_c\in (1/p,1]$ is given by the relation $$\mu_c=2\beta-1+\frac{1}{p}=\frac{d}{2q}+\frac{1}{p}-\frac{1}{2}$$
and the corresponding critical trace space in the strong setting reads
$$X_{\gamma,\mu_c}^s=(X_0^s,X_1^s)_{\mu_c-1/p,p}={_\|}B_{qp,\sigma}^{d/q-1}(\Omega).$$
The existence and uniqueness result for \eqref{eq:NSpartslip} in critical spaces reads as follows.
\begin{The}\label{thm:strongNavier}
Let $p\in (1,\infty)$ and $q\in (d/3,d)$ such that $\frac{2}{p}+\frac{d}{q}\le 3$. For any $u_0\in {_\|}B_{qp,\sigma}^{d/q-1}(\Omega)$ there exists a unique solution 
$$u\in H_{p,\mu_c}^1(0,a;L_{q,\sigma}(\Omega))\cap L_{p,\mu_c}(0,a;H_{q}^2(\Omega)^d)$$
of \eqref{eq:NSpartslip} for some $a=a(u_0)>0$, with $\mu_c=1/p+d/2q-1/2$. The solution exists on a maximal time interval $[0,t_+(u_0))$ and depends continuously on $u_0$. In addition, we have
$$u\in C([0,t_+);B_{qp,\sigma}^{d/q-1}(\Omega))\cap C((0,t_+);B_{qp,\sigma}^{2-2/p}(\Omega)),$$
which means that the solution regularizes instantly provided $2/p+d/q<3$.

Moreover, if the friction coefficient $\alpha>0$ and $\frac{4}{p}+\frac{d}{q}<3$, then there exists $r>0$ such that the solution $u(t,u_0)$ of \eqref{eq:NSpartslip} exists globally and converges to zero in the norm of $B_{qp}^{2-2/p}(\Omega)^d$ as $t\to\infty$, provided $\|u_0\|_{B_{qp}^{d/q-1}}\le r$.
\end{The}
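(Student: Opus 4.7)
The entire statement should be reachable as a direct application of the abstract Theorem \ref{thm:IntroThm} to equation \eqref{eq:strongNSnavierBC} with the data $X_0 = X_0^s = L_{q,\sigma}(\Omega)$, $X_1 = X_1^s = D(A_N)$, $A = A_N$ and the bilinear map $G(u,v) := -\PP(u\cdot\nabla v)$. The required $\mathcal{B}IP(X_0)$-property with power angle zero is supplied by Theorem \ref{thm:Hinfty-strong}. For the nonlinearity, the boundedness $G:X_\beta^s\times X_\beta^s\to X_0^s$ with $\beta = \tfrac{1}{4}(1+d/q)$ is exactly what is computed in the paragraph preceding the statement via H\"older's inequality and Sobolev embedding; the constraints $1/2<\beta<1$ translate to $q\in (d/3,d)$, the critical weight given by equality in \eqref{eq:IntroEqBeta} reads $\mu_c = d/(2q)+1/p-1/2$, and the admissibility $\mu_c\le 1$ is the condition $2/p+d/q\le 3$. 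Since the trace space has already been identified as $X_{\gamma,\mu_c}^s = {_\|}B_{qp,\sigma}^{d/q-1}(\Omega)$, Theorem \ref{thm:IntroThm} produces the local solution together with uniqueness and continuous dependence, and a standard concatenation argument extends it to a maximal interval $[0,t_+(u_0))$.

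For the instantaneous regularization part, the plan is to use that any function in $H^1_{p,\mu_c}(0,a;X_0^s)\cap L_{p,\mu_c}(0,a;X_1^s)$, restricted to $[\delta,a]$ with $\delta>0$, lies in the unweighted class $H^1_p(\delta,a;X_0^s)\cap L_p(\delta,a;X_1^s)$ and hence embeds continuously into $C([\delta,a];X_{\gamma,1}^s) = C([\delta,a];{_\|}B_{qp,\sigma}^{2-2/p}(\Omega))$. This bootstrap is built into the $L_{p,\mu}$-framework of \cite{PW16,PS16} and is automatic whenever $\mu_c<1$, i.e.\ $2/p+d/q<3$, giving the second continuity statement.

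For the global existence and exponential decay assertion I would follow exactly the scheme of Subsection \ref{sec:QB}. Since $A_N$ coincides with the $L_{q,\sigma}$-realization of $A_{N,w}$ along the interpolation-extrapolation scale constructed in Theorem \ref{thm:Hinfty-strong}, the two operators share the same spectrum; for $\alpha>0$ one has $\sigma(A_N)\subset (0,\infty)$ by Subsection \ref{subsec:weakHinftyNavier}, so the linearization at the equilibrium $u_*=0$ generates an exponentially stable analytic semigroup on $X_{\gamma,1}^s$. The additional hypothesis $4/p+d/q<3$ is exactly $1-\beta>1/p$, which yields the embedding $X_{\gamma,1}^s\hookrightarrow X_\beta^s$ and hence $F\in C^1(X_{\gamma,1}^s,X_0^s)$. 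The principle of linearized stability, applied as in \cite{KPW10,PSZ09}, then provides a ball in $X_{\gamma,1}^s$ from which all solutions decay exponentially; to translate this into the weaker norm on $X_{\gamma,\mu_c}^s$ I would invoke the smoothing estimate $\|u(t,u_0)\|_{X_{\gamma,1}^s}\le C\, t^{\mu_c-1}\|u_0\|_{X_{\gamma,\mu_c}^s}$ on $[\delta,a]$ exactly as in equation \eqref{eq:MRest}.

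The only potentially delicate step is the nonlinear estimate together with the juggling of the Sobolev and real-interpolation indices entering the critical trace space and the regularization embedding; the genuinely hard analytic input, namely the bounded $\mathcal{H}^\infty$-calculus of $A_N$ on $L_{q,\sigma}(\Omega)$ with angle zero, has already been dispatched by Theorem \ref{thm:Hinfty-strong}. Once the bookkeeping of indices is verified, the statement reduces to a dictionary translation of the abstract framework.
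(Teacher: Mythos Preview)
Your proposal is correct and follows essentially the same route as the paper: local existence via Theorem~\ref{thm:IntroThm} using the $\mathcal{H}^\infty$-calculus from Theorem~\ref{thm:Hinfty-strong} and the bilinear estimate with $\beta=\tfrac14(1+d/q)$, and global stability via the principle of linearized stability after observing that $4/p+d/q<3$ yields $X_{\gamma,1}^s\hookrightarrow X_\beta^s$ and hence $F\in C^1(X_{\gamma,1}^s,X_0^s)$. Your write-up is in fact more detailed than the paper's own proof, which omits the smoothing step \eqref{eq:MRest} bridging the critical trace norm $B_{qp}^{d/q-1}$ to $X_{\gamma,1}^s$; that step is indeed needed and you supply it correctly by mirroring Subsection~\ref{sec:QB}.
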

\begin{proof}
The local existence result follows directly by an application of Theorem \ref{thm:IntroThm}. For the proof of the second assertion, observe that the assumption
$$\frac{4}{p}+\frac{d}{q}<3$$
is equivalent to $1-\beta>\frac{1}{p}$ with $\beta=(1+d/q)/4$. In this case it holds that
$$X_{\gamma,1}^s=(X_0^s,X_1^s)_{1-1/p,p}\hookrightarrow X_\beta^s=D(A_{N}^\beta),$$
which implies $F\in C^1(X_{\gamma,1}^s;X_0^s)$. Since $\sigma(A_N)\subset (0,\infty)$ in case $\alpha>0$, we may apply the principle of linearized stability to \eqref{eq:strongNSnavierBC}, see e.g.\ \cite{KPW10,PSZ09}.
\end{proof}

\section{Critical spaces for the weak Dirichlet Stokes}\label{sec:DBC}

For the sake of completeness, in this section we consider the problem
\begin{align}
\begin{split}\label{eq:NSD}
\partial_t u-\Delta u+u\cdot\nabla u+\nabla\pi&=0,\quad t>0,\ x\in\Omega,\\
\div u&=0,\quad t>0,\ x\in\Omega,\\
u&=0,\quad t>0,\ x\in\Sigma,\\
u(0)&=u_0,\quad t=0,\ x\in\Omega,
\end{split}
\end{align}
for a bounded domain $\Omega\subset\R^d$ with boundary $\Sigma=\partial\Omega\in C^3$. 
It is well-known that the Stokes operator $A_D=-\PP\Delta$ with domain
$$ X_1 = D(A_D) := \{ u\in H^2_q(\Omega)^d\cap L_{q,\sigma}(\Omega):\; u=0 \mbox{ on } \partial\Omega\}$$
is sectorial in $X_0=L_{q,\sigma}(\Omega)$, and admits a bounded $\cH^\infty$-calculus with $\cH^\infty$-angle equal to zero, see e.g.\ \cite{HiSa16}. 

Let $A_0=A_D$. By \cite[Theorems V.1.5.1 \& V.1.5.4]{Ama95}, the pair $(X_0,A_0)$ generates an interpolation-extrapolation scale $(X_\alpha,A_\alpha)$, $\alpha\in\mathbb{R}$ with respect to the complex interpolation functor. Note that for $\alpha\in (0,1)$, $A_\alpha$ is the $X_\alpha$-realization of $A_0$ (the restriction of $A_0$ to $X_\alpha$) and 
$$X_\alpha=D(A_0^\alpha).$$ 
Let $X_0^\sharp:=(X_0)'$ and $A_0^\sharp:=(A_0)'$ with $D(A_0^\sharp)=:X_1^\sharp$. Then $(X_0^\sharp,A_0^\sharp)$ generates an interpolation-extrapolation scale $(X_\alpha^\sharp,A_\alpha^\sharp)$, the dual scale, and by \cite[Theorem V.1.5.12]{Ama95}, it holds that
$$(X_\alpha)'=X^\sharp_{-\alpha}\quad\text{and}\quad (A_\alpha)'=A^\sharp_{-\alpha}$$
for $\alpha\in \mathbb{R}$. 

To compute the spaces $X_\alpha$, we use the same approach as in Section \ref{sec:Hinftystrong}. Let $A=-\Delta$ subject to Dirichlet boundary conditions with domain
$$D(A)=\{u\in H_q^2(\Omega)^d:u=0\ \text{on}\ \Sigma\},$$
and define $Q:D(A)\to D(A_D)$ by $Q=A_D^{-1}\PP A$. Employing the same arguments as in Section \ref{sec:Hinftystrong} we see that $Q$ is a surjective projection and since $D(A)$ is dense in $L_q(\Omega)^d$ it admits a unique bounded and surjective extension $\tilde{Q}:L_q(\Omega)^d\to L_{q,\sigma}(\Omega)$. It follows that
\begin{align*}
X_{1/2}&=[X_0,D(A_D)]_{1/2}=[\tilde{Q} L_q(\Omega)^d,\tilde{Q}D(A)]_{1/2}\\
&=\tilde{Q}[L_q(\Omega)^d,D(A)]_{1/2}=D(A^{1/2})\cap R(\tilde{Q})\\
&={_0}H_q^1(\Omega)^d\cap L_{q,\sigma}(\Omega),
\end{align*}
see \cite[Theorem 1.17.1.1]{Tri78}, where
\begin{equation}\label{eq:H0}
{_0}H_q^s(\Omega)^d=
\begin{cases}
H_q^s(\Omega)^d&,\ 0\le s<1/q,\\
[L_q(\Omega)^d,D(A)]_{1/q}&, s=1/q,\\
\{u\in H_q^s(\Omega)^d:u|_{\partial\Omega}=0\}&, s>1/q.
\end{cases}
\end{equation}
Choosing $\alpha=1/2$ in the scale $(X_\alpha,A_\alpha)$, we obtain an operator $A_{-1/2}:X_{1/2}\to X_{-1/2}$, where
$X_{-1/2}=(X_{1/2}^\sharp)'$ (by reflexivity) and, since also $A_0^\sharp\in \cH^\infty(X_0^\sharp)$,
$$X_{1/2}^\sharp=D((A_0^\sharp)^{1/2})=[X_0^\sharp,X_1^\sharp]_{1/2}
={_0}H_{q'}^1(\Omega)^d\cap L_{q',\sigma}(\Omega),$$
with $p'=p/(p-1)$ being the conjugate exponent to $p\in (1,\infty)$.  Moreover, we have $A_{-1/2}=(A_{1/2}^\sharp)'$ and $A_{1/2}^\sharp$ is the restriction of $A_0^\sharp$ to $X_{1/2}^\sharp$. Thus, the operator 
$A_{-1/2}:X_{1/2}\to X_{-1/2}$
inherits the property of a bounded $\cH^\infty$-calculus with $\cH^\infty$-angle $\phi_{A_{-1/2}}^\infty=0$ from the operator $A_0$.

Since $A_{-1/2}$ is the closure of $A_0$ in $X_{-1/2}$ it follows that $A_{-1/2}u=A_0u$ for $u\in X_1$ and thus, for all $v\in X_{1/2}^\sharp$, it holds that
$$\langle A_{-1/2}u,v\rangle=(A_0u|v)_{L_2(\Omega)}=\int_\Omega \nabla u:\nabla v\ dx,$$
where we made use of integration by parts. Using that $X_1$ is dense in $X_{1/2}$, we obtain the identity
\begin{equation}\label{eq:weakStokesD}
\langle A_{-1/2}u,v\rangle=\int_\Omega \nabla u:\nabla v\ dx,
\end{equation}
valid for all $(u,v)\in X_{1/2}\times X_{1/2}^\sharp$. We call the operator $A_{-1/2}$ the \textbf{weak Stokes operator} subject to Dirichlet boundary conditions and we write $A_{D,w}=A_{-1/2}$. 

To compute the interpolation spaces, we define
$${_0}H_{q,\sigma}^s(\Omega):=
\begin{cases}
{_0}H_q^{s}(\Omega)^d\cap L_{q,\sigma}(\Omega)&,\ s\in[0,1],\\
(_{0}H_{q',\sigma}^{-s}(\Omega))'&,\ s\in[-1,0),
\end{cases}
$$
and
$${_0}B_{qp,\sigma}^s(\Omega):=
\begin{cases}
{_0}B_{qp}^{s}(\Omega)^d\cap L_{q,\sigma}(\Omega)&,\ s\in(0,1],\\
(_{0}B_{q'p',\sigma}^{-s}(\Omega))'&,\ s\in[-1,0),
\end{cases}
$$
and ${_0}B_{qp,\sigma}^0(\Omega):=(X_{-1/2},X_{1/2})_{1/2,p}$.
Here ${_0}B_{qp}^s(\Omega)^d$ for $s\ge 0$ is defined as in \eqref{eq:H0} with  $H_q^s$ replaced by $B_{qp}^s$ for $s\neq 1/q$, $[\cdot,\cdot]_{1/q}$ replaced by $(\cdot,\cdot)_{1/q,p}$ for $s=1/q$. As in Section \ref{sec:PSBC} we obtain the following result for the complex and real interpolation spaces.
\begin{Pro}
Let $\theta\in [0,1]$ and $p,q\in (1,\infty)$. Then
$$[X_{-1/2},X_{1/2}]_{\theta}={_0}H_{q,\sigma}^{2\theta-1}(\Omega),\ 2\theta-1\neq 1/q$$
and
$$(X_{-1/2},X_{1/2})_{\theta,p}={_0}B_{qp,\sigma}^{2\theta-1}(\Omega),\ 2\theta-1\neq 1/q.$$
Moreover, it holds that
$$(_{0}H_{q',\sigma}^{s}(\Omega))'=(_{0}H_{q'}^{s}(\Omega)^d)'\cap R(\tilde{Q}^*)$$
and
$$(_{0}B_{q'p',\sigma}^{s}(\Omega))'=(_{0}B_{q'p'}^{s}(\Omega)^d)'\cap R(\tilde{Q}^*)$$
for $s> 0$, where $\tilde{Q}^*$ denotes the dual of the restriction of $\tilde{Q}$ to $_{0}H_{q'}^{s}(\Omega)^d$ and $_{0}B_{q'p'}^{s}(\Omega)^d$, respectively.
\end{Pro}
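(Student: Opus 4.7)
The plan is to mirror the arguments given for the perfect-slip setting in Propositions \ref{pro:complexInt} and \ref{pro:2.4}, transported to the Dirichlet case via the bounded projection $\tilde{Q}$ constructed immediately before the proposition. For the complex interpolation identity, I would apply the reiteration theorem to the already-established relation $X_0 = [X_{-1/2}, X_{1/2}]_{1/2}$: for $s \in [0,1]$,
$$[X_0, X_{1/2}]_s = \bigl[[X_{-1/2}, X_{1/2}]_{1/2}, X_{1/2}\bigr]_s = [X_{-1/2}, X_{1/2}]_{(1+s)/2}.$$
This reduces the range $\theta \in [1/2, 1]$ to computing $[X_0, X_{1/2}]_{2\theta-1}$. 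Using the representations $X_0 = \tilde{Q} L_q(\Omega)^d$ and $X_{1/2} = \tilde{Q}\, {_0}H_q^1(\Omega)^d$ together with \cite[Theorem 1.17.1.1]{Tri78} on interpolation of complemented subspaces, one obtains
$$[X_0, X_{1/2}]_r = \tilde{Q}\bigl[L_q(\Omega)^d, {_0}H_q^1(\Omega)^d\bigr]_r = \tilde{Q}\, {_0}H_q^r(\Omega)^d = {_0}H_{q,\sigma}^r(\Omega),$$
for $r = 2\theta - 1 \in [0,1]\setminus\{1/q\}$, where the last equality uses the standard interpolation identity for $_0 H_q^r$ away from $r = 1/q$. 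For $\theta \in [0, 1/2)$, the index $2\theta - 1$ is negative, and the identity follows from the case just proved applied to the dual scale $(X_\alpha^\sharp, A_\alpha^\sharp)$ with $q$ replaced by $q'$, combined with the duality relation $(X_\alpha)' = X_{-\alpha}^\sharp$, reflexivity, and the definition of $_0 H_{q,\sigma}^{2\theta-1}(\Omega)$ for negative index.

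The real interpolation statement is entirely analogous: reiteration reduces it to $(X_0, X_{1/2})_{2\theta-1,p}$, and \cite[Theorem 1.17.1.1]{Tri78} allows one to pull $\tilde{Q}$ through the $K$-functional, yielding $\tilde{Q}\, {_0}B_{qp}^{2\theta-1}(\Omega)^d = {_0}B_{qp,\sigma}^{2\theta-1}(\Omega)$ for $\theta \in (1/2, 1]$; duality and reflexivity handle $\theta \in [0, 1/2)$, while the endpoint $\theta = 1/2$ is covered by the very definition of $_0 B_{qp,\sigma}^0(\Omega)$. For the two displayed dual representations, I would rely on the abstract fact that if $P$ is a bounded projection on a Banach space $Y$ with closed range $Z = PY$, then the map $f \mapsto f \circ P$ gives an isometric isomorphism between $Z'$ and $R(P^*) \subset Y'$; applying this to $Y = {_0}H_{q'}^s(\Omega)^d$ and $Y = {_0}B_{q'p'}^s(\Omega)^d$ with $P$ the corresponding restriction of $\tilde{Q}$ yields the claimed identities.

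The main technical obstacle is to verify that $\tilde{Q}$ indeed restricts to a bounded projection on each $_0 H_{q'}^s(\Omega)^d$ and $_0 B_{q'p'}^s(\Omega)^d$ for $s \in (0, 1]$, as the excerpt establishes this only on $L_{q'}(\Omega)^d$. I would derive these mapping properties by interpolating between the two endpoints: $\tilde{Q}$ is bounded on $L_{q'}(\Omega)^d$ (applying the construction of $\tilde{Q}$ with $q$ replaced by $q'$), and it acts as the identity on $D(A_D^\sharp) \subset {_0}H_{q'}^2(\Omega)^d$, hence it is bounded on each intermediate space for $s \in [0, 2]$ away from the critical value $1/q$. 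Alternatively, the duality estimate used in the excerpt to extend $Q$ from $D(A)$ to $L_q(\Omega)^d$ adapts verbatim with $L_q$-norms replaced by the relevant Sobolev or Besov norms, using the boundedness of $A A_D^{-1}$ in those scales. Once these bounds are in place, all remaining calculations are routine.
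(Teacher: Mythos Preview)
Your proposal is correct and follows essentially the same route as the paper, which does not give an explicit proof but simply states ``As in Section~\ref{sec:PSBC} we obtain the following result''---i.e., one is meant to mirror Propositions~\ref{pro:complexInt} and~\ref{pro:2.4} with the Helmholtz projection $\PP$ replaced by the projection $\tilde{Q}$ constructed just before the statement. Your use of reiteration, \cite[Theorem~1.17.1.1]{Tri78} for complemented subspaces, and duality for the negative range is exactly in this spirit; the boundedness of $\tilde{Q}$ on the intermediate spaces follows by interpolation between the endpoints $L_q(\Omega)^d$ and $D(A)$ already established in the text, so the ``technical obstacle'' you flag is genuine but handled precisely as you indicate.
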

Multiplying \eqref{eq:NSD} by a function $\phi\in {_0}H_{q',\sigma}^1(\Omega)$ and integrating by parts, we obtain the weak formulation
\begin{equation}\label{eq:weakD}
\partial_t u+A_{D,w}u=F_w(u),\quad u(0)=u_0,
\end{equation}
where 
$$\langle F_w(u),\phi\rangle:=\langle u\otimes u,\nabla\phi\rangle.$$
To solve the equation \eqref{eq:weakD}, we will apply Theorem \ref{thm:IntroThm} with the choice $X_0^w={_0}H_{q,\sigma}^{-1}(\Omega)$ and $X_1^w={_0}H_{q,\sigma}^{1}(\Omega)$. For that purpose we have to show that the nonlinearity $F_w:X_\beta^w\to X_{-1/2}$ is well defined, where 
$$X_\beta^w=D(A_{D,w}^\beta)=[X_0^w,X_1^w]_{\beta}={_0}H_{q,\sigma}^{2\beta-1}(\Omega),\ \beta\in (0,1).$$
By Sobolev embedding, we have $H_q^{2\beta-1}(\Omega)\hookrightarrow L_{2q}(\Omega)$ provided that $2\beta-1\ge \frac{d}{2q}$. From now on, we assume $2\beta-1=\frac{d}{2q}$, which means $q>d/2$ as $\beta<1$. Then the mapping 
$$[u\mapsto u\otimes u]:H_{q}^{2\beta-1}(\Omega)^d\to L_{q}(\Omega)^{d\times d}$$
is well defined and by H\"older's inequality, we obtain
$$\left((u\otimes u),\nabla\phi\right)_{L_2(\Omega)}\le \|u\|_{L_{2q}(\Omega)}^2\|\nabla\phi\|_{L_{q'}(\Omega)}.$$
Therefore, the nonlinear mapping $F_w:X_{\beta}^w\to X_{0}^w$
is well-defined. 

If $2\beta-1=d/2q$, the critical weight $\mu\in (1/p,1]$ is given by $\mu=1/p+d/2q$ and the corresponding critical trace space in the weak setting reads
$$X_{\gamma,\mu}^w=(X_0^w,X_1^w)_{\mu-1/p,p}={_0}B_{qp,\sigma}^{d/q-1}(\Omega)^d.$$
The existence and uniqueness result for \eqref{eq:weakD} in critical spaces reads as follows.
\begin{The}\label{thm:weakD}
Let $p\in (1,\infty)$ and $q\in (d/2,\infty)$ such that $\frac{1}{p}+\frac{d}{2q}\le 1$. For any $u_0\in {_0}B_{qp,\sigma}^{d/q-1}(\Omega)^d$ there exists a unique solution 
$$u\in H_{p,\mu}^1(0,a;{_0}H_{q,\sigma}^{-1}(\Omega))\cap L_{p,\mu}(0,a;{_0}H_{q,\sigma}^1(\Omega))$$
of \eqref{eq:weakD} for some $a=a(u_0)>0$, with $\mu=1/p+d/2q$. The solution exists on a maximal time interval $[0,t_+(u_0))$ and depends continuously on $u_0$. In addition, we have
$$u\in C([0,t_+);{_0}B_{qp,\sigma}^{d/q-1}(\Omega))\cap C((0,t_+);{_0}B_{qp,\sigma}^{1-2/p}(\Omega)),$$
which means that the solution regularizes instantaneously provided $1/p+d/2q<1$.

Moreover, the following assertions hold.
\begin{enumerate}
\item If $\frac{2}{p}+\frac{d}{2q}< 1$ there exists $r>0$ such that the solution $u(t,u_0)$ of \eqref{eq:weakD} exists globally and converges to zero in the norm of $B_{qp}^{1-2/p}(\Omega)^d$ as $t\to\infty$, provided $\|u_0\|_{B_{qp}^{d/q-1}}\le r$.
\item If $p>2$ and $q\ge d$, there exists $r>0$ such that the solution $u(t,u_0)$ of \eqref{eq:weakD} exists globally and converges to zero in the norm of $B_{qp}^{2-2/p}(\Omega)^d$ as $t\to\infty$, provided $\|u_0\|_{B_{qp}^{d/q-1}}\le r$.
\end{enumerate}
\end{The}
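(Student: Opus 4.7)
The plan is to apply Theorem \ref{thm:IntroThm} directly to the weak Cauchy problem \eqref{eq:weakD} with the choice $X_0 = X_0^w = {_0}H_{q,\sigma}^{-1}(\Omega)$, $X_1 = X_1^w = {_0}H_{q,\sigma}^{1}(\Omega)$, $A = A_{D,w}$, and $G(u,u) = F_w(u)$. The required hypotheses are already in place. From the extrapolation construction carried out earlier in this section, $A_{D,w}\in\mathcal{H}^\infty(X_0^w)$ with $\phi_{A_{D,w}}^\infty = 0$, and since $\Omega$ is bounded, Poincaré's inequality applied to $\langle A_{D,w}u,u\rangle=\|\nabla u\|_{L_2}^2$ (by density from $X_1$) shows $\sigma(A_{D,w})\subset(0,\infty)$. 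Consequently $A_{D,w}\in\mathcal{B}IP(X_0^w)$ with power angle $\theta_{A_{D,w}}=0<\pi/2$, without any shift. The boundedness of $F_w:X_\beta^w\times X_\beta^w\to X_0^w$ with $\beta=\tfrac{1}{2}+\tfrac{d}{4q}$ was verified in the paragraph preceding the theorem via Sobolev embedding and Hölder. The critical relation $2\beta-1=d/(2q)$ yields $\mu_c=1/p+d/(2q)$ and the critical trace space ${_0}B_{qp,\sigma}^{d/q-1}(\Omega)^d$ (identified in the preceding proposition). This delivers the local existence, uniqueness, and continuous dependence claims, together with the embedding into $C([0,a];X_{\gamma,\mu_c}^w)$.

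For the instantaneous regularization $u\in C((0,t_+);{_0}B_{qp,\sigma}^{1-2/p}(\Omega))$, I would use the standard time-shift argument for solutions in time-weighted maximal regularity spaces: for each $s\in(0,t_+)$ one has $u(s)\in X_{\gamma,1}^w=(X_0^w,X_1^w)_{1-1/p,p}={_0}B_{qp,\sigma}^{1-2/p}(\Omega)$, and one re-solves with initial value $u(s)$ in the unweighted class $\mu=1$. The continuity statement then follows by the continuous dependence component of Theorem \ref{thm:IntroThm}.

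For the global assertion (1), the condition $2/p+d/(2q)<1$ is equivalent to $1-\beta>1/p$, which gives the embedding
$$X_{\gamma,1}^w=(X_0^w,X_1^w)_{1-1/p,p}\hookrightarrow [X_0^w,X_1^w]_\beta = X_\beta^w,$$
so that $F_w\in C^1(X_{\gamma,1}^w;X_0^w)$. Since $\sigma(A_{D,w})\subset(0,\infty)$, the equilibrium $u_*=0$ is exponentially stable, and the principle of linearized stability (cf.\ \cite{KPW10,PSZ09}) yields global existence and exponential decay in $X_{\gamma,1}^w\hookrightarrow B_{qp}^{1-2/p}(\Omega)^d$ for small initial data.

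For the global assertion (2), the strategy is to bootstrap to the strong setting exactly as in Subsection \ref{sec:QB}. For $p>2$ and $q\geq d$, the weak solution regularizes at positive times to lie in $B_{qp,\sigma}^{1-2/p}(\Omega)\hookrightarrow B_{qp,\sigma}^{2\mu-2/p}(\Omega)$ for some $\mu\in(1/p,1/2)$ with $2\beta'-1\leq\mu-1/p$ where $\beta'>1/2$ is used for the strong nonlinearity $F(u)=-\PP(u\cdot\nabla u):X_{\beta'}\to X_0=L_{q,\sigma}(\Omega)$. Since $A_D\in\mathcal{H}^\infty(L_{q,\sigma}(\Omega))$ has maximal $L_p$-regularity and $\sigma(A_D)\subset(0,\infty)$, an application of \cite{LPW14} extends the solution into the strong class, and linearized stability in $X_{\gamma,1}=D(A_D^{1-1/p,p})\hookrightarrow B_{qp}^{2-2/p}(\Omega)^d$ closes the argument. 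The main technical obstacle will be bookkeeping the embeddings ${_0}B_{qp,\sigma}^{s}(\Omega)$ near the critical exponents $s=1/q$ and $s=1+1/q$ entering the definition of the strong trace space, but all the necessary identifications are available through the projection $\tilde Q$ and the Amann extrapolation framework developed at the beginning of this section.
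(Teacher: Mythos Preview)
Your proposal is correct and follows essentially the same route as the paper: apply Theorem~\ref{thm:IntroThm} in the weak Dirichlet scale for local well-posedness and regularization, then reproduce the long-time arguments of Subsection~\ref{sec:QB} (which lead to Theorem~\ref{thm:weakNavierqual}) for assertions (1) and (2). The paper's own proof is the two-line version of exactly what you wrote out in detail; your justification of $\sigma(A_{D,w})\subset(0,\infty)$ via Poincar\'e is a harmless alternative to simply inheriting invertibility from $A_D$ through the extrapolation scale.
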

\begin{proof}
The first assertion follows directly from Theorem \ref{thm:IntroThm}, while the second assertion can be proven by the same arguments which lead to Theorem \ref{thm:weakNavierqual}.
\end{proof}

\end{document}